\documentclass[11pt,a4paper]{article}

\usepackage{caption2}
\usepackage{CJK}
\usepackage{tabls}
\usepackage{bm}
\usepackage{multirow}
\usepackage{amssymb}
\usepackage{amsfonts}
\usepackage{mathrsfs}
\usepackage{empheq}
\usepackage{amsmath}
\usepackage{amsthm}
\usepackage{graphicx}
\usepackage{color}
\usepackage{indentfirst}
\usepackage{hyperref}
\usepackage{float}
\usepackage{cases}
\usepackage[titletoc]{appendix}
\usepackage[colorinlistoftodos,english]{todonotes}

\allowdisplaybreaks%

\def\e{\eqref}

\def\i1n{i=1,\cdots,n}
\def\j1n{j=1,\cdots,n}
\def\ij1n{i,j=1,\cdots,n}

\def\R{\mathbb R}
\def\N{\mathbb N}

\def \i{\mathrm i}

 \numberwithin{equation}{section}
\theoremstyle{definition}

\def\R{{\bf R}}

\def\N{{\bf N}}

\def\d{\displaystyle}
\def\e{{\varepsilon}}

\newtheorem{thm}{Theorem}[section]

\newtheorem{lem}{Lemma}[section]
\newtheorem{prop}{Proposition}[section]
\newtheorem{rem}{Remark}[section]

\begin{document}

\begin{CJK*}{GB}{gbsn}
\title{Lifespan estimate for one dimensional wave equation with semilinear terms of spatial derivative }

\author{Ning-An Lai \thanks{Department of Mathematics, Shaoxing University, Shaoxing 312000, China (ninganlai@usx.edu.cn)}
\and Cui Ren \thanks{School of Mathematical Sciences, Zhejiang Normal University, Jinhua 321004, China (rencui@zjnu.edu.cn)}
\and Takiko Sasaki \thanks{Department of Mathematical Engineering, Faculty of Engineering, Musashino University, 3-3-3
Ariake, Koto-ku, Tokyo 135-8181, Japan (t-sasaki@musashino-u.ac.jp)}
\and Hiroyuki Takamura \thanks{Mathematical Institute, Tohoku University, Aoba, Sendai 980-8578, Japan (hiroyuki.takamura.a1@tohoku.ac.jp)} }

\maketitle

\begin{abstract}
This paper studies the upper and lower bounds of the lifespan for the classical solutions
to the initial value problems of one dimensional wave equations
with non-autonomous semilinear terms including the space-derivative of the unknown function.
This is a non-trivial business comparing to the analogous results with time-derivative type semilinear terms,
especially for the proof to obtain the sharp upper bound of the lifespan
as we have to deal with space dependent weights
among iteration procedures of  the weighted functional of the solution.
Also it is surprising that a part of them reaches to the same ordinary differential inequality for classical semilinear damped wave equations introduced by Li and Zhou (Discrete Contin. Dynam. Systems, 1995, 1(4): 503-520),
and we show a simple proof for blow up result from this ordinary differential inequality by iteration argument and slicing method in more general situation.
\end{abstract}

\begin{keywords} \ semilinear wave equation, one dimension, classical solution,
lifespan, space dependent weight
\end{keywords}

\textbf{Mathematics Subject Classification 2020:} primary 35L71, secondary 35B44,


\section{Introduction}
In this paper, we are concerned with lifespan estimates of classical solutions
to the Cauchy problem of wave equations,
\begin{equation}
\label{IVP10}
\left\{
\begin{aligned}
& u_{tt} - u_{xx} = \frac{|u_x|^p}{\langle x\rangle^a} \quad\text{in $\R\times(0,T)$}, \\
& u(x,0)=\e f(x), \quad u_t(x,0)=\e g(x), \quad x\in\R,
\end{aligned}
\right.
\end{equation}
where $p>1,a\in\R$, $\e>0$ is a \lq\lq small" parameter and $ \langle x\rangle= (1+x^2)^{1/2} $ for any $x\in \R $.
The initial data $f$ and $g$ are smooth functions of compact support.
We are interested in the estimate of the lifespan $T(\e)$, i.e., the maximal existence time of the solution
defined by
\[
T(\e):=\sup\left\{T\in(0,\infty]\ :\ \mbox{the classical solution of (\ref{IVP10}) exists in the time interval $[0,T)$}\right\}.
\]
For autonomous case ($a=0$), it has been well-studied by
Sasaki, Takamatsu and Takamura \cite{STT2023} and Haruyama, Sasaki and Takamura \cite{HST}
that the lifespan estimates satisfy
\begin{equation}
\label{lifespan_auto}
T(\e)\sim C\e^{-(p-1)},
\end{equation}
where $T(\e)\sim A(\e,C)$ stands for the fact that
there exist two positive constants $C_1$ and $C_2$ independent of $\e$
such that
\[
A(\e,C_1)\le T(\e) \le A(\e,C_2).
\]
We note that the left inequality means that the solution exists at least to the time $A(\e,C_1)$ for any initial data,
namely it means long time existence of the solution,
while the right inequality means that there exist some special data for which the solution cannot exist
after the time $A(\e,C_2)$,
namely it implies the finite time blow-up of the solution.
\par
The early work on the semilinear wave equation with nonlinear term $|u_t|^p$, especially the upper bound of the lifespan,
is due to Zhou \cite{Zhou2001},
which also has the same upper bound of (\ref{lifespan_auto}).
This result is quite meaningful as it guarantees the sharpness of the general theory for nonlinear wave equations,
we refer to Takamura \cite{Takamura} for details and all the references on this direction.
The lower bound of lifespan was studied by Kitamura, Morisawa and Takamura \cite{KMT2023}
including non-autonomous nonlinear terms with space-variable dependent weight,
namely the problem for which $|u_x|^p$ is replaced with $|u_t|^p$ in (\ref{IVP10}) is studied.
The main difficulty to handle the nonlinear term of $|u_x|^p$ appears in the blow-up part
as we have to employ the special weighted functional which was introduced
by Rammaha \cite{Rammaha95, Rammaha97} for higher dimensional case like \cite{STT2023},
while we have a point-wise positivity of the solution for the nonlinear term $|u_t|^p$.
See \cite{HST} for details.
\par
The main purpose of this paper is to show that the lifespan $T(\e)$ of the Cauchy problem \eqref{IVP10} satisfies
\begin{equation}
\label{lifespan_non-auto}
\begin{array}{l}
T(\e)\sim
\left\{
\begin{array}{ll}
C\e^{-(p-1)/(1-a)} & \mbox{if}\ a<1,\\
\exp\left(C\e^{-(p-1)}\right) & \mbox{if}\ a=1,
\end{array}
\right.
\\
T(\e)=\infty \quad\mbox{if}\ a>1.
\end{array}
\end{equation}
This estimate is the same as the one in \cite{KMT2023}.
We note that the proof of the upper bound of the lifespan, blow-up,
in some case (Theorem \ref{thm2} below)
is reduced to a blow-up result on second order ordinary differential inequality (\ref{Gt}) below.
The remarkable fact is that it is exactly same as (3.18) in Theorem 3.1 by Li and Zhou \cite{LZ1995}
which establishes the sharp upper bound of the lifespan of solutions of the Cauchy problem for
classical semilinear damped wave equation in general space dimensions,
\[
u_{tt}-\Delta u+u_t=|u|^p\quad\mbox{in}\ \R^n\times(0,T).
\]
Their proof is organized the comparison and rescaling arguments,
and is available for $0\le a\le1$ in (\ref{Gt}) below.
But our proof employs only simple iteration argument and slicing method,
and succeeds to extend the result for any $a\le1$.
See Lemma \ref{2lem1} below as well as Remark \ref{rem:Li-Zhou}.
We note that the iteration argument is originally introduced to nonlinear hyperbolic equations
by John \cite{John1979} to show the sub-critical blow-up for semilinear wave equations in three space dimensions.
And the slicing method is firstly introduced by Agemi, Kurokawa and Takamura \cite{AKT2000}
to show the critical blow-up for systems semilinear wave equations in three space dimensions.
It is still useful to shorten the proof of the blow-up result especially in the critical case.
See Shao, Sasaki and Takamura \cite{SST2025} for example.

\par
This paper is organized as follows.
(\ref{lifespan_non-auto}) is divided into three theorems according to three cases in Section 2.
The blow-up part of (\ref{lifespan_non-auto}) is proved in Section 3 and 4 with different conditions
on the data.
The existence part of (\ref{lifespan_non-auto}) is shortly proved in Section 5.


\section{Main results}
Throughout  of this paper, we assume that $(f,g)\in C_0^2(\R)\times C_0^1(\R)$ satisfy
\begin{align}
\label{supp_data}
\mathrm{supp}\, f, \mathrm{supp}\, g \subset \{ x \in \mathbb{R} : |x| \leq R \}, \quad R \geq 1.
\end{align}
Let $u$ be a classical solution of (\ref{IVP10}).
Then it is well-known that the support condition of the initial data implies that
\begin{align}
\label{supp_sol}
\mathrm{supp}\, u(x,t)  \subset \{ (x,t) \in \R\times [0,T] : |x| \leq t+ R \}.
\end{align}

\begin{thm}
\label{thm1}
Assume \eqref{supp_data} and,
\begin{equation}
\label{fg1}
f(x), g(x) \geq 0 \ \text{and} \ f(x) \not\equiv 0,
\end{equation}
or
\begin{equation}
\label{fg2}f(x), g(x)\geq0  \ \mbox{and} \ g(x)\not\equiv 0.
\end{equation}
Then there exists a positive constant $\e_1=\e_1(f,g,p,a,R)$ such that the classical solution of the Cauchy problem \eqref{IVP10} cannot exist as far as $T$ satisfies
\begin{equation}\label{T1}
T>
\left\{
\begin{array}{ll}
C\e^{-(p-1)/(1-a)} & \mbox{if}\ a<1,\\
\exp\left(C\e^{-(p-1)}\right) & \mbox{if}\ a=1
\end{array}
\right.
\end{equation}
for all $\e\in(0,\e_1]$, where $C$ is a positive constant independent of $ \varepsilon$.
\end{thm}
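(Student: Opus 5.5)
The plan is to work with the d'Alembert representation and a Rammaha-type Hölder trick along the right-moving light cone, which reduces the problem to a scalar integral inequality along a characteristic line $x-t=\text{const}$. By d'Alembert's formula,
\[
u(x,t)=\e u^0(x,t)+\frac12\int_0^t\int_{x-t+s}^{x+t-s}\frac{|u_x(y,s)|^p}{\langle y\rangle^a}\,dy\,ds,
\qquad
u^0(x,t)=\frac{f(x+t)+f(x-t)}{2}+\frac12\int_{x-t}^{x+t}g(y)\,dy .
\]
By \eqref{fg1} or \eqref{fg2} the linear part is nonnegative and the integral is nonnegative, so $u\ge \e u^0\ge0$. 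The key observation replaces the pointwise positivity that $|u_t|^p$ would give. By the support property \eqref{supp_sol}, $u(s+R,s)=0$, so $u(y_0,s)=-\int_{y_0}^{s+R}u_x(y,s)\,dy$. Hölder then gives
\[
\int_{y_0}^{s+R}|u_x(y,s)|^p\,dy\ \ge\ \frac{u(y_0,s)^p}{(s+R-y_0)^{p-1}} .
\]

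Next fix a characteristic $x-t=\eta$ with $\eta\in(-R,R)$, and set $V(t):=u(\eta+t,t)$. In the backward triangle of $(\eta+t,t)$, the $y$-interval at time $s$ is $[\eta+s,\ \eta+2t-s]$. It contains $[\eta+s,\ s+R]$ as soon as $s\le (2t+\eta-R)/2$. On that interval $y=s+O(R)$, so $\langle y\rangle^{-a}\ge c\langle s\rangle^{-a}$ for any real $a$. The length $s+R-y_0=R-\eta$ is a fixed constant. Dropping the remaining nonnegative part of the integral yields
\[
V(t)\ \ge\ \e\, u^0(\eta+t,t)+\frac{c}{(R-\eta)^{p-1}}\int_0^{t-R}\langle s\rangle^{-a}\,V(s)^p\,ds \qquad (t\ge R).
\]
I choose $\eta$ so that the linear part is bounded below by a positive constant for all large $t$. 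For $t\ge R$ one has $u^0(\eta+t,t)=\tfrac12 f(\eta)+\tfrac12\int_\eta^R g$. Under \eqref{fg1} I take $\eta$ with $f(\eta)>0$. Under \eqref{fg2} I take $\eta$ slightly to the left of the right end of $\mathrm{supp}\,g$, so that $\int_\eta^R g>0$. This gives $V(t)\ge c_0\e+c_1\int_0^{t-R}\langle s\rangle^{-a}V(s)^p\,ds$ for $t\ge R$.

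The last step is a blow-up analysis of this one-dimensional integral inequality. I would first run John's iteration. Starting from $V\ge c_0\e$, insert it into the integral to get $V(t)\ge C_j\e^{p^j}(t-jR)^{a_j}$, with $a_{j+1}=pa_j+1-a$ for $a<1$, and track the constants through the recursion $C_{j+1}\ge c\,C_j^p/(\text{polynomial in } p^j)$. The lower bound diverges as $j\to\infty$ once $\e^{p-1}t^{1-a}$ exceeds a large constant, which gives $T\le C\e^{-(p-1)/(1-a)}$. For $a=1$ the gain per step is a logarithm. Here I would use the slicing method: iterate on the shrinking intervals $t\ge l_j:=\prod_{k\le j}(1+2^{-k})$ (suitably rescaled), so that each step gains a factor $\log(t/l_j)$. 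The bound then reads $V\ge C_j\e^{p^j}(\log t)^{b_j}$, and divergence occurs once $\e^{p-1}\log t$ is large, giving $\exp(C\e^{-(p-1)})$. Alternatively, comparison with the ODE $W'=c_1\langle t\rangle^{-a}W^p$, $W(R)=c_0\e$, integrated explicitly, gives the same thresholds.

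I expect the main obstacle to be the geometric reduction in the second paragraph rather than the ODE step. It must keep the Hölder denominator $(R-\eta)^{p-1}$ bounded while still capturing an integral over a time interval of length comparable to $t$, with the weight comparable to $\langle s\rangle^{-a}$. The truncation $s\le t-R$ and the shift by $R$ must also be handled so that they cost only harmless constants in the iteration. The case $a=1$, where the slicing bookkeeping is essential, is the delicate end of the ODE part.
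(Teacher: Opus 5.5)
Your reduction in the first two paragraphs is correct: $u\ge 0$, the H\"older bound on $[\eta+s,s+R]$ using $u(s+R,s)=0$, the comparison $\langle y\rangle^{-a}\ge c\langle s\rangle^{-a}$, and the formula $u^0(\eta+t,t)=\frac12 f(\eta)+\frac12\int_\eta^R g$ for $t\ge R$ all hold. The gap is in the last step. The inequality you reach,
\[
V(t)\ \ge\ c_0\e+c_1\int_R^{t-\delta}\langle s\rangle^{-a}V(s)^p\,ds,\qquad \delta=\tfrac{R-\eta}{2}>0 \ (\delta=R \text{ in your version}),
\]
has a fixed time delay. A delayed Volterra inequality of this type does not imply finite-time blow-up at all.

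To see this, take the function defined by equality. It is finite and continuous for every $t$ by the method of steps: on $[R+k\delta,R+(k+1)\delta]$ it is the integral of a function already known to be bounded. So no argument using only this inequality can produce $V=\infty$ at a finite time; you only get double-exponential growth.

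The failure shows up concretely in each of your three routes. Your iterate $C_j\e^{p^j}(t-jR)^{a_j}$ is valid only for $t\ge jR$, so at a fixed $t$ you can iterate only about $t/R$ times and cannot let $j\to\infty$. The slicing sequence $l_j=\prod_k(1+2^{-k})$ absorbs a multiplicative shrinking of the integration range, not a fixed additive shift. The ODE comparison also fails: you have $W'(t)\ge c_1\langle t-\delta\rangle^{-a}W(t-\delta)^p$, and $W(t-\delta)\le W(t)$ goes the wrong way. So the shift is not a harmless constant; it is fatal.

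The delay is intrinsic to working on a single characteristic. For $t-\delta<s\le t$ the backward interval $[\eta+s,\eta+2t-s]$ no longer reaches the edge $x=s+R$ of the support, so there is no vanishing endpoint for the H\"older trick. Sending $\eta\to R$ to shrink $\delta$ destroys both the factor $(R-\eta)^{1-p}$ and the linear lower bound.

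The missing idea is to average over characteristics, which is what the paper does. It integrates $u$ over the strip $t+R_0\le x\le t+R$, writes $\int_{s+R_0}^{s+R}u\,dy=-\int_{s+R_0}^{s+R}(y-s-R_0)u_y\,dy$, and applies H\"older with the weight $y-s-R_0$. For $y\in[s+R_0,s+R]$, the set of $\eta\in[R_0,R]$ whose backward interval contains $y$ has length $\min\{y-s-R_0,\,2(t-s)\}\ge\frac{t-s}{t}(y-s-R_0)$. Hence times $s$ near $t$ are retained with a kernel vanishing linearly, rather than being cut off.

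Combined with the double time integration in $H(t)=\int(t-s)\,ds\int_{s+R_0}^{s+R}u\langle x\rangle^{-a/p}dx$, this gives the delay-free inequality $H''\ge D\,t^{1-2p-a/p}|H|^p$. Lemma \ref{lem1} applies to that for $a<1$, and Lemma \ref{lem3} with the weight $x^{-1}$ for $a=1$.
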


\begin{rem}
The conclusion of the above theorem implies that
\[
T(\e)\le\left\{
\begin{array}{ll}
C\e^{-(p-1)/(1-a)} & \mbox{if}\ a<1,\\
\exp\left(C\e^{-(p-1)}\right) & \mbox{if}\ a=1.
\end{array}
\right.
\]
\end{rem}

\begin{thm}
\label{thm2}
Assume \eqref{supp_data} and
\begin{equation}
\label{initial}
\begin{aligned}
\int_{\R}(-e^x+e^{-x})f'(x)dx\ge0,\
\int_{\R}(-e^x+e^{-x})\{g'(x)-f'(x)\}dx\ge0(\not\equiv0).
\end{aligned}
\end{equation}
Then, the same conclusion as in Theorem \ref{thm1} holds.
\end{thm}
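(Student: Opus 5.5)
The plan is to reduce the PDE to a second order ordinary differential inequality for a weighted functional of $u_x$. The weight is chosen to match the data condition \eqref{initial}, namely $\phi(x):=e^{-x}-e^{x}$, which satisfies $\phi''=\phi$ and $\phi'=-(e^x+e^{-x})<0$. Set
\[
F(t):=\int_{\R}u_x(x,t)\phi(x)\,dx .
\]
Using \eqref{supp_sol} to drop boundary terms, differentiate twice, use the equation, and integrate by parts:
\[
F''(t)=\int_{\R}u_{xxx}\phi\,dx+\int_{\R}\partial_x\Big(\frac{|u_x|^p}{\langle x\rangle^a}\Big)\phi\,dx
=F(t)+\int_{\R}\frac{|u_x|^p}{\langle x\rangle^a}(e^x+e^{-x})\,dx=:F(t)+N(t).
\]
(For $u\in C^2$ I would do this in the weak form, integrating by parts against $\phi$ rather than differentiating $u_{xx}$.) To remove the exponential growth of the homogeneous part, write $F(t)=e^{t}K(t)$. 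A direct computation gives
\[
K''+2K'=e^{-t}N(t).
\]

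Next I bound $N$ from below by $|K|^p$. By H\"older's inequality on the support $|x|\le t+R$,
\[
|F(t)|\le\int_{|x|\le t+R}|u_x|(e^x+e^{-x})\,dx\le N(t)^{1/p}\Big(\int_{|x|\le t+R}\langle x\rangle^{a/(p-1)}(e^x+e^{-x})\,dx\Big)^{(p-1)/p}.
\]
The last integral is bounded by $C(t+R)^{a/(p-1)}e^{t+R}$; for $a<0$ this is still true up to a constant, since the exponential weight concentrates the integral near $|x|\sim t+R$. Hence
\[
N(t)\ge C(t+R)^{-a}e^{-(p-1)t}|F(t)|^p ,
\]
and therefore
\[
K''+2K'\ge C(t+R)^{-a}|K|^p .
\]
After a harmless rescaling of $t$ and $K$, this is exactly the Li--Zhou type inequality (\ref{Gt}). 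The initial values are
\[
K(0)=F(0)=\e\int_{\R}(e^{-x}-e^{x})f'\,dx\ge0,\qquad K'(0)=F'(0)-F(0)=\e\int_{\R}(e^{-x}-e^{x})(g'-f')\,dx>0,
\]
by \eqref{initial}. So the data condition is precisely what makes $K(0)\ge0$ and $K'(0)\gtrsim\e$.

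The remaining step is the ODE blow-up Lemma \ref{2lem1}, which I would prove by iteration and slicing. Writing $(e^{2t}K')'=e^{2t}(t+R)^{-a}|K|^p\ge0$ gives $K'\ge0$, so $K\ge0$ and, after a unit of time, $K\ge c\e$. Then
\[
K'(t)\ge\int_0^t e^{-2(t-s)}(s+R)^{-a}K(s)^p\,ds\gtrsim (t+R)^{-a}K(t-1)^p
\]
for $t\ge1$, using the monotonicity of $K$. Inserting the lower bound $K\ge D_j(t-\ell_j)^{\alpha_j}$ repeatedly, with shrinking shifts $\ell_j=\sum_{k\le j}2^{-k}$ (the slicing), produces exponents $\alpha_{j}$ growing like $p^j(1-a)/(p-1)$ and constants with $\log D_j\gtrsim p^j\log\big(\e (t)^{(1-a)/(p-1)}\big)$. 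This forces blow-up once $t\gtrsim\e^{-(p-1)/(1-a)}$ for $a<1$. In the case $a=1$ the same scheme is run with a logarithmic variable, $K\ge D_j(\log(t/\ell_j))^{\alpha_j}$, giving $T\le\exp(C\e^{-(p-1)})$. I expect this critical case $a=1$ (and keeping the slicing constants summable) to be the main technical obstacle; the functional reduction itself is straightforward.
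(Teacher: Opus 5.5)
\textbf{Where the sketch breaks.} Your reduction to the ordinary differential inequality is the paper's. Your weight $e^{-x}-e^{x}$ is the paper's $\psi$, your $K=e^{-t}F$ is the paper's $G$, and the H\"older step and the identification of $K(0)$, $K'(0)$ with the two integrals in \eqref{initial} coincide, so you arrive at \eqref{Gt}. The gap is in your sketch of the blow-up lemma. You derive $K'(t)\gtrsim (t+R)^{-a}K(t-1)^p$ with a \emph{fixed} delay $1$, and then iterate $K\ge D_j(t-\ell_j)^{\alpha_j}$ with summable shifts $\ell_j=\sum_{k\le j}2^{-k}$. These two choices are incompatible. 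Inserting the ansatz into $K(t-1)$ gives $(t-1-\ell_j)^{p\alpha_j}$, which is valid only for $t\ge\ell_j+1$. So the next shift is $\ell_j+1$, not $\ell_j+2^{-j-1}$, and the shifts grow like $j$. At fixed $t$ you then cannot bound $(t-\ell_j)^{\alpha_j}$ below by $(t/2)^{\alpha_j}$ for all $j$, which is exactly what the limit $j\to\infty$ needs. Using $t-1\ge t/2$ instead costs $2^{-\alpha_j}=2^{-cp^j}$, which destroys the gain $\log D_j\gtrsim p^j\log(\varepsilon t^{(1-a)/(p-1)})$.

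\textbf{The repair.} Let the delay shrink with the slicing. For $0<\delta\le 1$ one has $K'(t)\ge\frac{1-e^{-2\delta}}{2}\min_{t-\delta\le s\le t}(s+R)^{-a}\,K(t-\delta)^p$. Taking $\delta=2^{-j-1}$ at step $j$ costs only a factor $\gtrsim 2^{-j}$, i.e.\ a harmless term $-Cj$ in $\log D_{j+1}$. This is exactly the role of the factor $2/(2p)^{n+2}$ obtained from $1-e^{2(l_n/l_{n+1}-1)s}$ in \eqref{26-2}, where the paper restricts the inner integral of \eqref{2lem12} to the proportional window $l_ns/l_{n+1}\le r\le s$.

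\textbf{The case $0<a<1$, and comparison with the paper.} For $0<a<1$ your pure power ansatz does not reproduce itself. On $[\ell_{j+1},t]$ the factor $(s+R)^{-a}$ is only bounded below by $(t+R)^{-a}$, which is smaller than $(t-\ell_{j+1})^{-a}$, so you must carry an extra factor $(1+t)^{-\beta_j}$, as the paper does in \eqref{th21}. With these corrections your route is a legitimate variant of Lemma \ref{2lem1}: monotonicity of $K$ gives a first-order delayed inequality, followed by additive slicing. It buys a slightly simpler step, since the exponential kernel is disposed of once and each iteration is a single integration. The paper instead works directly with the integral inequality \eqref{2lem12} and never uses monotonicity of $G$, so its lemma stands on its own hypotheses for every $a\le1$.
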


\begin{rem}
Note that integration by parts yields
\[
\int_{\R}(-e^x+e^{-x})f'(x)dx=\int_{\R}(e^x+e^{-x})f(x)dx,
\]
so that the condition (\ref{initial}) can be written by $f$ and $g$ instead of $f'$ and $g'$.
\end{rem}

\begin{thm}
\label{thm3}
Assume \eqref{supp_data}.
Then there exists a positive constant $\e_2=\e_2(f,g,p,a,R)$ such that
the classical solution of the Cauchy problem \eqref{IVP10} exists as far as $T$ satisfies
\[
T<
\left\{
\begin{array}{ll}
C\e^{-(p-1)/(1-a)} & \mbox{if}\ a<1,\\
\exp\left(C\e^{-(p-1)}\right) & \mbox{if}\ a=1,\\
\infty & \mbox{if}\ a>1
\end{array}
\right.
\]
for all $\e\in(0,\e_2]$,
where $C$ is a positive constant independent of $ \varepsilon$.
\end{thm}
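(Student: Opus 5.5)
We prove Theorem \ref{thm3} by a contraction argument for $v:=u_x$, working directly with the d'Alembert representation. Write $u=\e u^0+L(F(u_x))$, where
\[
u^0(x,t)=\frac{f(x+t)+f(x-t)}{2}+\frac12\int_{x-t}^{x+t}g(y)\,dy,\qquad
L(F)(x,t)=\frac12\int_0^t\int_{x-t+s}^{x+t-s}F(y,s)\,dy\,ds ,
\]
with $F(v)=|v|^p\langle x\rangle^{-a}$. Differentiating in $x$ gives the closed integral equation
\[
v=\e u^0_x+L'(F(v)),\qquad L'(F)(x,t)=\frac12\int_0^t\bigl\{F(x+t-s,s)-F(x-t+s,s)\bigr\}ds .
\]
Here $u^0_x=\tfrac12\{f'(x+t)+f'(x-t)+g(x+t)-g(x-t)\}$ is bounded by $C\e$ and supported in the two strips $|x\pm t|\le R$. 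Once $v$ is found in $C^1$, we recover $u$ from the first formula; it is then a classical solution, since $F(v)\in C^1$ (for $p>1$) and $(f,g)\in C^2_0\times C^1_0$.

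The key estimate is for $L'$ acting on sources concentrated near the light cone. Fix $T$ and let $X=\{v\in C(\R\times[0,T]):\ \mathrm{supp}\,v\subset\{|x|\le t+R\},\ \|v\|:=\sup|v|<\infty\}$. If $\|v\|\le M$, then $|F(v)(y,s)|\le M^p\langle y\rangle^{-a}$. In $L'$ the integration runs along the characteristics $y=x\pm(t-s)$. The key point is that $v$ itself must be controlled, not just $|v|\le M$. Otherwise the weight is integrated along characteristics that stay in a bounded region: for instance, along $y=x+t-s$ with $x+t$ bounded, $\langle y\rangle\sim 1$ and the integral grows like $t$.

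To handle this, we use the following structure. For $v$ in the iteration, $v$ is essentially a sum of a right-going and a left-going wave, each concentrated near $|x\mp t|\le R$. So we refine the norm to
\[
\|v\|:=\sup \bigl(w_+(x,t)+w_-(x,t)\bigr)^{-1}|v(x,t)|,
\]
where $w_\pm$ are bounded weights equal to $1$ near the respective strips. Since $L'$ preserves this structure only up to terms involving $\int\langle y\rangle^{-a}$, the honest plan is the one in \cite{KMT2023}: it suffices to show
\[
\|L'(F(v))\|\le C D(T)\|v\|^p,\qquad
D(T)=\begin{cases}T^{1-a}, & a<1,\\ \log(2+T), & a=1,\\ 1, & a>1,\end{cases}
\]
and the analogous difference estimate $\|L'(F(v))-L'(F(\tilde v))\|\le CD(T)(\|v\|+\|\tilde v\|)^{p-1}\|v-\tilde v\|$.

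These follow from
\[
\int_0^t\langle x\pm(t-s)\rangle^{-a}\,ds\le C\,D(t),
\]
uniformly in $x$. This bound comes from the substitution $y=x\pm(t-s)$, which gives an integral of $\langle y\rangle^{-a}$ over an interval of length $t$. It is maximized when the interval is centred at $0$ and equals $O(t^{1-a})$, $O(\log t)$, or $O(1)$ in the three cases.

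The contraction then closes on the ball $\|v\|\le 2C_0\e$ provided $CD(T)(4C_0\e)^{p-1}\le\tfrac12$. This holds for $T^{1-a}\le c\e^{-(p-1)}$, for $\log T\le c\e^{-(p-1)}$, or for every $T$ when $a>1$ and $\e\le\e_2$. These are exactly the stated lifespan lower bounds. $C^1$ regularity of $v$ is obtained by the same scheme applied to $v_x$ and $v_t$: differentiating $L'$ produces boundary terms $F(v)$ and integrals of $pF'(v)v_x$, which are linear in the derivatives with coefficient bounded by $CD(T)\|v\|^{p-1}$. Alternatively, one can run the iteration in $C^1$ with the same norm.

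The main obstacle is this uniform-in-$x$ characteristic integral together with the fact that the solution does not decay. Here the one-dimensional structure helps, because $L'$ only integrates along single characteristics. Hence no extra growth appears beyond $D(T)$, and no pointwise decay of $v$ is needed. This is the reason the result matches \cite{KMT2023}.
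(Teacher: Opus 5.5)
Your architecture matches the paper's. You solve the $u_x$-closed equation $v=\e u^0_x+\overline{L'_a}(|v|^p)$ by iteration in a sup-norm space carrying the support condition $|x|\le t+R$, using $\|\overline{L'_a}(|v|^p)\|\le C\|v\|^pE(T)$, then upgrade to $C^1$ and recover $u$. The paper recovers $u$ via $u=\int_{-\infty}^x v\,dy$ and you via Duhamel; both give $u_x=v$. The detour through the weights $w_\pm$ is unnecessary. Along $y=x\pm(t-s)$ the point $y$ moves at unit speed, so the plain sup norm suffices. However, your justification of the key characteristic bound is false for $a<0$, which the theorem covers. Then $\langle y\rangle^{-a}=\langle y\rangle^{|a|}$ increases in $|y|$, so the integral over an interval of length $t$ is not maximized by the centred interval. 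In fact $\int_0^t\langle x\pm(t-s)\rangle^{-a}\,ds$ is unbounded as $|x|\to\infty$, so no bound uniform in $x$ exists. You must use the support of $v$: the integrand vanishes unless $|x\pm(t-s)|\le s+R$. Hence $\langle y\rangle^{|a|}\le (s+R+1)^{|a|}$ there, and the integral is at most $C(t+2R)^{1-a}$. This is why the paper's $E(T)$ is $(T+2R)^{1-a}$; your $D(T)=T^{1-a}$ is also too small for small $T$ when $a<0$.

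The $C^1$ step has a second gap, for $1<p<2$. There the map $v\mapsto|v|^{p-2}v$ is only $(p-1)$-H\"older, so the derivative iteration
\[
(w_{j+1})_x=\e u^0_{xx}+p\,\overline{L'_a}\bigl(|w_j|^{p-2}w_j(w_j)_x\bigr)-a\,\overline{L'_{a+2}}\bigl(|w_j|^px\bigr)
\]
is not a contraction in $\|w\|+\|w_x\|$. Your alternative, running the iteration in $C^1$ with the same norm, therefore fails. What works is the following. First, bound $\|(w_j)_x\|$ uniformly; your linear-with-small-coefficient observation gives this. Second, get uniform convergence of $w_j$ from the sup-norm contraction. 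Third, show $(w_j)_x$ is Cauchy using $\bigl\||w_j|^{p-2}w_j-|w_{j-1}|^{p-2}w_{j-1}\bigr\|\le C\|w_j-w_{j-1}\|^{p-1}$, which is still geometrically summable. This H\"older issue is exactly the oversight in \cite{KMT2023} that the paper flags and handles by citing \cite{KSTT2024}. Also, $x$-differentiation hits the weight $\langle y\rangle^{-a}$ and produces the term $-a\,\overline{L'_{a+2}}(|v|^px)$, which you dropped. It is harmless, since $|x|\langle x\rangle^{-a-2}\le\langle x\rangle^{-a-1}$ and its characteristic integral grows at most like $E(T)$.
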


\begin{rem}
The conclusion of the above theorem implies that
\[
\begin{array}{l}
T(\e)\ge
\left\{
\begin{array}{ll}
C\e^{-(p-1)/(1-a)} & \mbox{if}\ a<1,\\
\exp\left(C\e^{-(p-1)}\right) & \mbox{if}\ a=1,
\end{array}
\right.
\\
T(\e)=\infty \quad\mbox{if}\ a>1.
\end{array}
\]
\end{rem}

All the theorems above are proved with the concrete expression of a solution $u$ of (\ref{IVP10}),
\begin{equation}
\label{integral}
u(x,t)=\e u^0(x,t)+L_{a}(|u_x|^p)(x,t),
\end{equation}
where $u^0$ is a solution of the free wave equation with the same initial data;
\begin{equation}
\label{linear}
u^0(x,t):=\frac{1}{2}\{f(x+t)+f(x-t)\}+\frac{1}{2}\int_{x-t}^{x+t}g(y)dy,
\end{equation}
and a linear integral operator $L_{a}$ for a function $v=v(x,t)$ is Duhamel's term defined by
\begin{equation}
\label{nonlinear}
L_a(v)(x,t):=\frac12\int_0^t ds \int_{x-t+s}^{x+t-s}
\frac{v(y,s)}{(1+y^2)^{a/2}}\,dy.
\end{equation}


\section{Proof of Theorem \ref{thm1}}
In this section, we give the proof of Theorem \ref{thm1}, the case 1 and case 2 of which will be presented in subsection \ref{sec11} and \ref{sec12}, respectively.

\subsection{Proof of Theorem \ref{thm1} under the assumption (\ref{fg1})}
\label{sec11}

\par\noindent
\textbf{The subcritical case}.
\par
Before proceeding with the proof for the subcritical case, we first establish a key lemma.
\begin{lem}
\label{lem1}
Let $a\in\R$ and $p>1$.
Assume that $ a<1 $, and the function $H=H(t)\in C([E,T))$ satisfies
\begin{align}
\label{lem11}& H(t)\geq D_1t^{2-a/p},\\
\label{lem12}& H(t) \geq D_2 \int_E^tds\int_E^sr^{1-2p-a/p}|H(r)|^pdr
\end{align}
for $t\ge E$ with some positive constants $D_1$, $D_2$ and $E$.
Then, there exists a positive constant $D=D(D_2,p,a)$ such that $H(t) $ cannot exist if
\begin{equation}
\label{estimate:T}
T>(DD_1^{-1})^{\frac{p-1}{1-a}}.
\end{equation}
where $$ D= \frac{ p^{\frac{2p}{(p-1)^2}} 2^{\frac{2p+1-a}{p-1}} (2p+1-a )^{\frac{2}{p-1}} }{ \left(D_2(p-1)^2\right)^{\frac{1}{p-1} }}, $$
which satisfies
\[
 (DD_1^{-1})^{\frac{p-1}{1-a}} > 2E.
 \]
\end{lem}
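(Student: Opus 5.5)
We prove this by John's iteration argument. We feed the lower bound \eqref{lem11} into the integral inequality \eqref{lem12} and obtain a sequence of lower bounds
\[
H(t)\ge C_j\,(t-E)^{b_j}\, t^{-c_j}\qquad (t\ge E),\qquad j=0,1,2,\dots
\]
Here $C_0=D_1$, $b_0=0$, $c_0=-(2-a/p)$ (or we simply start from $H\ge D_1 t^{2-a/p}$). The exponents grow geometrically like $p^j$. The conclusion comes from letting $j\to\infty$: the lower bound tends to infinity whenever $t$ exceeds the threshold in \eqref{estimate:T}, which contradicts $H\in C([E,T))$.

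\textbf{Iteration step.} Suppose $H(r)\ge C_j (r-E)^{b_j} r^{-c_j}$. Inside the double integral we have $r\le s\le t$, and we assume $c_j\ge0$ (otherwise we bound the positive power of $r$ from below using $r\ge E$, or simply keep the first step separate). Then
\[
r^{1-2p-a/p}\,r^{-pc_j}\ \ge\ t^{\,1-2p-a/p-pc_j}
\]
once the exponent is nonpositive. This is the case $1-2p-a/p<0$; if it fails, we handle it by a crude split. Integrating $(r-E)^{pb_j}$ twice gives
\[
H(t)\ge \frac{D_2C_j^p}{(pb_j+1)(pb_j+2)}\,(t-E)^{pb_j+2}\,t^{-(pc_j+2p-1+a/p)}.
\]
This yields the recursions $b_{j+1}=pb_j+2$, $c_{j+1}=pc_j+2p-1+a/p$, and $C_{j+1}=D_2C_j^p/(pb_j+2)^2$. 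The exponents solve to
\[
b_j=\frac{2}{p-1}(p^j-1),\qquad c_j=\alpha p^j-\beta ,
\]
with explicit $\alpha,\beta$. At the start $H\ge D_1 t^{2-a/p}$, so the first iterate is $(t-E)^2 t^{\,1-2p-a/p}\cdot D_1^p t^{2p-a}$. The net power then tracks $(t-E)^{b_j}t^{-c_j}$ with $b_j-c_j\to$ growth rate $(1-a)p^j/(p-1)$ up to constants. Since $pb_j+2\le 2(2p+1-a)p^{j}/(p-1)$-type bounds hold (this is where the factor $2p+1-a$ in $D$ enters), the standard estimate $\log C_{j}\ge p^j\big(\log D_1 - S\big)$ follows. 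Here $S=\sum_k p^{-k}\log\big(\text{const}\cdot p^{2k}/D_2\big)$ is finite, and exponentiating $S$ (with $\sum k p^{-k}=p/(p-1)^2$) gives exactly the constant $D$ in the statement, including the powers $p^{2p/(p-1)^2}$ and $(D_2(p-1)^2)^{-1/(p-1)}$.

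\textbf{Conclusion.} For $t\ge 2E$ we have $t-E\ge t/2$, so
\[
H(t)\ge \exp\!\Big(p^j\big[\log D_1-\log D+\tfrac{1-a}{p-1}\log t\big]\Big)\times(\text{factor polynomial in }t,\ 2^{-b_j}\text{ absorbed into }D).
\]
If $t>(DD_1^{-1})^{(p-1)/(1-a)}$, the bracket is positive, and letting $j\to\infty$ gives $H(t)=\infty$. This is the contradiction. The assumption that this threshold exceeds $2E$ guarantees that $t\ge 2E$ is allowed.

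\textbf{Main obstacle.} The main difficulty is the bookkeeping of the $t$-powers. The weight $r^{1-2p-a/p}$ is decreasing, and the iterates carry negative powers of $t$. The sign conventions must be arranged so that monotonicity lets us pull $t$-powers out of the integrals, and the constants must be tracked precisely enough to land on the stated $D$, in particular the factor $2^{(2p+1-a)/(p-1)}$ coming from $t-E\ge t/2$. I would first verify that the net exponent of $t$ at step $j$ equals $\frac{1-a}{p-1}p^j$ plus lower-order terms, since that determines the critical time.
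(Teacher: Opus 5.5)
Your argument has a genuine gap in the iteration step. You pull the weight out as $r^{\gamma_j}\ge t^{\gamma_j}$ with $\gamma_j:=1-2p-a/p-pc_j$, and this is only valid when $\gamma_j\le 0$. The condition $1-2p-a/p<0$ that you mention is not the relevant one. Solving your own recursion from $c_0=-(2-a/p)$ gives
\[
c_j=\frac{(1+a)p^j-(2p-1)-a/p}{p-1}.
\]
This causes two failures:
\begin{itemize}
\item $\gamma_0=1-a-a/p>0$ whenever $a<p/(p+1)$. So your displayed first iterate, obtained by pulling out $t^{1-2p-a/p}t^{2p-a}$, does not follow in that range.
\item For $a\le -1$ you have $c_j\le c_0<0$, hence $\gamma_j\ge 1-a-a/p>0$ for \emph{every} $j$, and $\gamma_j\to\infty$ if $a<-1$. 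The recursion $c_{j+1}=pc_j+2p-1+a/p$ is therefore unjustified at every step on that part of the range $a<1$.
\end{itemize}
Your fallback, bounding the positive power from below ``using $r\ge E$'', i.e.\ $r^{\gamma_j}\ge E^{\gamma_j}$, throws away exactly the growth you need. For example, with $p=2$ and $a=-3$, the iteration then only gives blow-up for $t\gtrsim D_1^{-4/5}$, with an $E$-dependent constant, instead of $t\gtrsim D_1^{-1/4}$.

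The missing idea is to move positive powers into the $(t-E)$-factor: $r^{\gamma}\ge (r-E)^{\gamma}$ for $\gamma>0$. If you do this whenever $\gamma_j>0$, the recursion for the net exponent $d_j=b_j-c_j$, namely $d_{j+1}=pd_j+3-2p-a/p$, is unchanged, so your growth rate $\frac{1-a}{p-1}p^j$ survives.

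The paper builds this in uniformly for all $a<1$. It starts from $D_1t^{2-a/p}\ge D_1(t-E)^{2+(1-a)/p}t^{-1/p}$ and uses the ansatz $C_n(t-E)^{a_n}t^{-b_n-1/p}$. It then splits
\[
r^{1-2p-a/p}\ge r\,(r-E)^{(1-a)/p}\,t^{-2p-1/p},
\]
so the factor $r$ cancels the $r^{-1}$ coming from $|H|^p$, and only nonpositive powers $t^{-b_np}$ are ever extracted.

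This is also where $2p+1-a$ comes from. The recursion $a_{n+1}=pa_n+2+(1-a)/p$ gives $a_n\le\frac{2p+1-a}{p-1}p^{n-1}$, which produces both $(2p+1-a)^{2/(p-1)}$ and, via $t-E\ge t/2$, the factor $2^{(2p+1-a)/(p-1)}$. In your recursion $b_j\sim\frac{2}{p-1}p^j$, so your claim that the bookkeeping lands exactly on the stated $D$ is unfounded. You would have to recompute the constant for the scheme you actually use and check that it does not exceed $D$.
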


\par\noindent
\textbf{Proof.}
From \eqref{lem11}, it is easy to get
$$ H(t)\geq D_1 (t-E)^{2+(1-a)/p} t^{-1/p}.  $$
Under the assumption \eqref{lem12}, we may claim that
\begin{align}\label{ht}
 H(t) \geq C_n (t-E)^{a_n} t^{-b_n - \frac{1}{p}}, \ t \ge E,
\end{align}
with the recurrence relations
\begin{equation}
\label{seq}
\left\{
\begin{aligned}
& a_{n+1}= pa_n +2+\frac{1-a}{p}, \ \ a_1=(1-a)/p +2, \\
& b_{n+1}= pb_n +2p, \ \  b_1=0,\\
& C_{n+1}=\frac{D_2 C_n^p}{(a_{n+1})^2 } , \ \ C_1=D_1,
\end{aligned}
\right.
\end{equation}
where $ a_n,b_n \geq0. $

In the following we are going to prove the above claim. It holds by \eqref{lem12} that
\begin{align}\label{diedai}
 H(t) \geq D_2 t^{-2p-1/p}  \int_{E}^tds\int_{E}^sr (r-E)^{(1-a)/p} |H(r)|^pdr.
\end{align}
Plugging \eqref{ht} into \eqref{diedai} yields
\begin{align*}
H(t)& \geq D_2 C_n^p t^{-2p-1/p}  \int_{ E}^tds\int_{E}^sr (r-E)^{(1-a)/p}(r-E)^{a_n p} r^{-b_n p-1}  dr\\
& \geq D_2 C_n^p t^{-2p-1/p-b_n p } \int_{E}^tds\int_{E}^s (r-E)^{(1-a)/p +a_n p} dr\\
& \geq \frac{D_2 C_n^p}{( a_n p+2+(1-a)/p)^2} t^{-2p-1/p-b_n p } (t-E)^{(1-a)/p +a_n p+2},
\end{align*}
which implies the claim \eqref{ht} with \eqref{seq}.

Solving the recurrence relations \eqref{seq}, we obtain
\begin{align}
\label{an}& a_n= \frac{2p+1-a}{p-1}p^{n-1} - \frac{2p+1-a}{p(p-1)},\\
\label{bn}& b_n =\frac{2p}{p-1} p^{n-1}- \frac{2p}{p-1}.
\end{align}
Obviously we have by \eqref{an}
\begin{align*}
& a_{n+1} \leq \frac{2p+1-a}{p-1}p^{n},
\end{align*}
which yields by $\eqref{seq}_3$
\begin{align}\label{cn1}
C_{n+1}& \geq p^{-2n} \frac{D_2C_n^p(p-1)^2}{(2p+1-a)^2}= D_3 \frac{C_{n}^p}{p^{2n}}
\end{align}
with
\[
D_3 = \frac{D_2(p-1)^2}{(2p+1-a)^2}.
\]
It follows by \eqref{cn1}
\begin{align*}
\log C_{n+1} & \geq p \log C_n  - (2n)\log p + \log D_3,
\end{align*}
which further leads to
\begin{align*}
& \log C_{n+1} - (n+1)\frac{2 \log p}{p - 1} - \left( \frac{2 \log p}{(p-1)^2} - \frac{\log D_3}{p-1} \right)\\
& \geq p \left( \log C_n - \frac{2 \log p}{p - 1} n - \left( \frac{2 \log p}{(p-1)^2} - \frac{\log D_3}{p-1}\right)\right).
\end{align*}
Iterating the above inequality to $n=1$, we come to
\begin{align}\label{cn}
\log C_n \geq p^{n-1} \log \left( \frac{D_1 D_3^{\frac{1}{p-1}}}{ p^{\frac{2p}{(p-1)^2}} } \right) + n\frac{2 \log p}{p-1} + \log \left( \frac{ p^{\frac{2}{(p-1)^2}} }{ D_3^{\frac{1}{p-1}} } \right).
\end{align}
Substituting \eqref{an}, \eqref{bn} and \eqref{cn} into \eqref{ht} yields
\begin{align}\label{jt}
H(t)& \geq C_n (t-E)^{a_n} t^{-b_n - \frac{1}{p}}\nonumber\\
&\geq \left( \frac{D_1 D_3^{\frac{1}{p-1}}}{ p^{\frac{2p}{(p-1)^2}} } \right)^{p^{n-1}} p^{\frac{2n(p-1)+2}{(p-1)^2}} D_3^{-\frac{1}{p-1}}(t-E)^{a_n} t^{-b_n - \frac{1}{p}}.
\end{align}
Hence for $ t\geq 2E $, it follows that
\begin{align}\label{3-1}
H(t)&\geq \left( \frac{D_1 D_3^{\frac{1}{p-1}}}{ p^{\frac{2p}{(p-1)^2}} } \right)^{p^{n-1}} p^{\frac{2n(p-1)+2}{(p-1)^2}} D_3^{-\frac{1}{p-1}}\left( \frac{t}{2} \right)^{a_n} t^{-b_n - \frac{1}{p}}\nonumber\\
& \geq \left( \frac{D_1 D_3^{\frac{1}{p-1}} t^{ \frac{1-a}{p-1}} }{ p^{\frac{2p}{(p-1)^2}} 2^{\frac{2p+1-a}{p-1}} } \right)^{p^{n-1}} p^{\frac{2n(p-1)+2}{(p-1)^2}} D_3^{-\frac{1}{p-1}} 2^{-\frac{2p+1-a}{p(1-p)}} t^{ \frac{2p^2-3p+a}{p(p-1)}}.
\end{align}
Hence if
\begin{align}\label{T}
 \frac{D_1 D_3^{\frac{1}{p-1}} t^{ \frac{1-a}{p-1}} }{ p^{\frac{2p}{(p-1)^2}} 2^{\frac{2p+1-a}{p-1}} } >1,
 \end{align}
then we find that $ H(t)\rightarrow \infty $ by taking the limit as $n\rightarrow \infty $ in \eqref{3-1}. This means the upper bound of lifespan $T$ for $H(t)$ satisfies
$$ T \leq  \left(D_1^{-1} D\right)^{\frac{p-1 }{1-a}}$$
with
$$ D= \frac{ p^{\frac{2p}{(p-1)^2}} 2^{\frac{2p+1-a}{p-1}} (2p+1-a )^{\frac{2}{p-1}} }{ (D_2(p-1)^2)^{\frac{1}{p-1} }}. $$
This completes the proof of Lemma \ref{lem1}.
\hfill$\square$

Going back to our problem \eqref{IVP10}, we define the functional
\begin{align*}
H(t):=\int_{1}^t (t-s) ds\int_{s+R_0}^{s+R} \frac{u(x,s)}{\langle x\rangle^{a/p}}dx,
\end{align*}
where $$ 1 \leq R_0<R. $$ It is easy to see
\begin{align}\label{c5c6}
D_5 \int_{1}^t \frac{(t-s)}{s^{a/p}}ds\int_{s+R_0}^{s+R} u(x,s)dx \leq H(t) \leq D_6 \int_{1}^t \frac{(t-s)}{s^{a/p}}ds\int_{s+R_0}^{s+R} u(x,s)dx
\end{align}
with
\begin{align*}
& D_5:=\min \left\{ \frac{1}{\big( 2(R+1)\big)^{a/p}}, \,1  \right\},\ D_6:=\max \left\{ \frac{1}{\big( 2(R+1)\big)^{a/p}}, \,1  \right\}.
\end{align*}

Differentiating to $H(t)$ with respect to time twice yields
\begin{align}\label{hde10}
H''(t)&= \int_{t+R_0}^{t+R}\frac{u(x, t)}{\langle x\rangle^{a/p}}dx\nonumber\\
& \geq \frac{D_5}{t^{a/p}} \int_{t+R_0}^{t+R} u(x,t)dx \nonumber\\
& \geq \frac{D_5 \varepsilon}{2}\frac{1}{t^{a/p}} \int_{t+R_0}^{t+R} \left\{ f(x+t) + f(x-t) +  \int_{x-t}^{x+t} g(y)dy \right\} dx\nonumber\\
& \ \ \ \ + \frac{D_5}{2}\frac{1}{t^{a/p}} F(t),
\end{align}
where
\begin{align}\label{Ft}
F(t) := \int_{t+R_0}^{t+R} dx \int_0^t ds \int_{x-t+s}^{x+t-s} \frac{|u_x(y,s)|^p}{ \langle y\rangle^a   } dy.
\end{align}
From \eqref{supp_data} and \eqref{hde10}, we obtain
\begin{align}\label{lin}
H^{\prime\prime}(t) & \geq\frac{D_5\varepsilon}{2 t^{a/p}} \int_{t+R_{0}}^{t+R} f(x-t) dx \geq  \frac{ C_f D_5\varepsilon}{2 t^{a/p}},
\end{align}
where
\begin{align*}
& C_f := \int_{R_{0}}^{R} f(y) dy>0,
\end{align*}
by assuming that there exists $  x_0 \in (R_0, R) $ such that $f(x_0)>0 $.
Integrating \eqref{lin} over $[1,t]$, noting that \( a < 1 \) and hence \( 1 -   a/p > 0 \),
we obtain
\begin{align*}
H'(t) & \geq \frac{C_f D_5 \varepsilon}{2} \int_{1}^t s^{ -  a/p} ds + H'(1) \\
& = \frac{C_f D_5 \varepsilon}{2(1 - a/p) } \left[ s^{1 -  a/p} \right]_{1}^{t}
\end{align*}
for $ t \geq 1 .$ Integrating the above inequality again yields
\begin{align*}
H(t) & \geq \frac{ C_f D_5\varepsilon}{2(1 -  a/p)} \int_{1}^t (s^{1 - a/p}-1) \, ds +H(1) \nonumber\\
& \geq \frac{ C_f D_5 (1-(1/2)^{1-a/p})\varepsilon}{2(1 -  a/p)}  \int_{2}^t s^{1 - a/p} \, ds
\end{align*}
for $ t \geq 2.$  It can be calculated that
\begin{align}\label{cf}
H(t) & \geq \frac{ C_f D_5 (1-(1/2)^{1-a/p})(1-(1/2)^{2-a/p})\varepsilon}{2(1 -  a/p) (2 - a/p)}  t^{2 - a/p}\nonumber\\
& =  D_7 \varepsilon t^{2 - a/p}
\end{align}
for $ t \geq 4,$ where
\begin{align}
& D_7:=\frac{ C_f D_5 (1-(1/2)^{1-a/p})(1-(1/2)^{2-a/p})}{2(1 -  a/p) (2 - a/p)}.
\end{align}

It follows from the definition \eqref{Ft} that
\begin{align}\label{11-1}
F(t)=\int_{0}^{t}ds\int_{t+R_0}^{t+R}dx\int_{x-t+s}^{x+t-s}\frac{|u_x(y,s)|^p}{ \langle y\rangle^a  }dy.
\end{align}
In the following we assume that
\begin{align*}
t\geq R_{1}:=\frac{R-R_0}{2}>0,
\end{align*}
then similar to \cite{STT2023}, it holds for $0\leq s\leq t-R_1$
\begin{align}\label{11-2}
& \int_{t+R_{0}}^{t+R}dx\int_{x-t+s}^{x+t-s}\frac{|u_x(y,s)|^p}{\langle y\rangle^a}dy \nonumber\\
& =  \left( \int_{s+R_0}^{s+R} \int_{t+R_0}^{y+t-s} + \int_{s+R}^{2t-s+R_0} \int_{t+R_0}^{t+R} + \int_{2t-s+R_0}^{2t-s+R} \int_{y-t+s}^{t+R} \right) \frac{|u_x(y,s)|^p}{\langle y\rangle^a} dxdy \nonumber\\
& \geq\int_{s+R_0}^{s+R}dy\int_{t+R_0}^{y+t-s}\frac{|u_x(y,s)|^p}{ \langle y\rangle^a}dx,
\end{align}
while for \( t - R_1\leq s \leq t \), we have
\begin{align}\label{11-3}
&\int_{t+R_0}^{t+R} dx \int_{x-t+s}^{x+t-s} \frac{|u_x(y,s)|^p}{  \langle y\rangle^a} dy \nonumber\\
& = \left( \int_{s+R_0}^{2t-s+R_0} \int_{t+R_0}^{y+t-s} + \int_{2t-s+R_0}^{s+R} \int_{y-t+s}^{y+t-s} + \int_{s+R}^{2t-s+R} \int_{y-t+s}^{t+R} \right) \frac{|u_x(y,s)|^p}{\langle y\rangle^a} dxdy \nonumber\\
& \geq \int_{s+R_0}^{2t-s+R_0} dy \int_{t+R_0}^{y+t-s} \frac{|u_x(y,s)|^p}{ \langle y\rangle^a} dx + \int_{2t-s+R_0}^{s+R} dy \int_{y-t+s}^{y+t-s} \frac{|u_x(y,s)|^p}{\langle y\rangle^a} dx.
\end{align}
Combining \eqref{11-1}-\eqref{11-3}, we arrive at
\begin{align}\label{ft}
F(t) & \geq \int_{0}^{t-R_1} \frac{t - s}{t} ds \int_{s+R_0}^{s+R} (y - s - R_0) \frac{|u_x(y,s)|^p}{\langle y\rangle^a} dy \nonumber\\
& \quad + \int_{t-R_1}^{t} \frac{t - s}{t} ds \int_{s+R_0}^{2t-s+R_0} (y - s - R_0) \frac{|u_x(y,s)|^p}{ \langle y\rangle^a} dy \nonumber\\
& \quad + \int_{t-R_1}^{t} 2(t-s) ds \int_{2t-s+R_0}^{s+R} \frac{y - s - R_0}{2t} \frac{|u_x(y,s)|^p}{\langle y\rangle^a} dy \nonumber\\
& \geq \frac{1}{t} \int_{0}^{t} (t - s) ds \int_{s+R_0}^{s+R} (y - s - R_0) \frac{|u_x(y,s)|^p}{\langle y\rangle^a} dy \nonumber\\
& \geq \frac{D_8}{t} \int_{0}^{t} \frac{ (t - s)}{ s^{a}} ds \int_{s+R_0}^{s+R} (y - s - R_0) |u_x(y,s)|^pdy,
\end{align}
where
\[
D_8:=\min \left\{ \frac{1}{\left( 2(R+1)\right)^{a}}, \,1  \right\}.
\]
By combining \eqref{hde10} and \eqref{ft}, we obtain
\begin{align}\label{101}
H''(t)& \geq \frac{D_5}{2} t^{-a/p} F(t) \nonumber\\
& \geq
 \frac{D_5D_8}{2}  t^{-1-a/p} \int_{0}^{t} (t - s) ds \int_{s+R_0}^{s+R} \frac{(y - s - R_0)|u_x(y,s)|^p }{ s^{a}} dy .
\end{align}
On the other hand, employing H$\ddot{\mbox{o}}$lder's inequality and integration by parts yields
\begin{align}\label{102}
|H(t)| & \leq D_6 \left| \int_{0}^{t}  (t - s) ds \int_{s+R_0}^{s+R} \frac{\partial_y (y - s - R_0) u(y, s)}{s^{a/p}} dy \right|  \nonumber\\
& \leq D_6 \int_{0}^{t}(t - s)ds \int_{s+R_0}^{s+R} \frac{ (y - s - R_0) |u_x(y, s)| } {s^{a/p}} dy \nonumber\\
& \leq D_6 \left( \int_{0}^{t} (t - s) ds \int_{s+R_0}^{s+R} \frac{(y - s - R_0)|u_x(y,s)|^p }{ s^{a}} dy \right)^{1/p} I(t)^{1-1/p},
\end{align}
where
\begin{align*}
I(t)& := \int_{0}^t (t - s) ds \int_{s + R_0}^{s + R} (y - s - R_0) dy\\
& = \frac{1}{4} t^2 (R-R_0)^2 = t^2 R_1^2.
\end{align*}
It then concludes from \eqref{101} and \eqref{102}
\begin{align*}
H''(t)& \geq \frac{D_5D_8}{2} D_6^{-p} R_{1}^{-2(p-1)} t^{-1-2(p-1)} t^{-a/p} |H(t)|^p \\
& \geq D_9 t^{1-2p-a/p} |H(t)|^p
\end{align*}
with
\[
D_9:=\frac{D_5D_8}{2} D_6^{-p} R_{1}^{-2(p-1)}
\]
and $ t \geq R_1 $.
Integrating the above inequality with respect to $t$ twice yields
\begin{align}
\label{frame10}
H(t) \geq D_9 \int_{R_1}^tds\int_{R_1}^sr^{1-2p-a/p}|H(r)|^pdr
\quad \text{for } t \geq R_1.
\end{align}
Employing Lemma \ref{lem1} to \eqref{cf} and \eqref{frame10}, the solution of the Cauchy problem \eqref{IVP10} cannot exist if $t$ satisfies
$$ t > C_1 \e^{-(p-1)/(1-a)} $$
with
$$ C_1= \left(  \frac{ p^{\frac{2p}{(p-1)^2}} 2^{\frac{2p+1-a}{p-1}} (2p+1-a )^{\frac{2}{p-1}} }{ (D_9(p-1)^2)^{\frac{1}{p-1} } D_7} \right)^{\frac{p-1}{1-a}}.$$
Hence we obtain the lifespan estimate $\eqref{T1}_1$ for the subcritical case
for $ \varepsilon\in(0, \varepsilon_1]  $, where
\begin{align}\label{e11}
\varepsilon_1=( 2R_1)^{-\frac{1-a}{p-1}} (C_1)^{\frac{1-a}{p-1}},
\end{align}
in this case, which is determined by $ C_1 \e^{-(p-1)/(1-a)}  > 2R_1 $.
\hfill$\square$

\par\noindent
\textbf{The critical case.}

Next, we focus the lifespan estimate $\eqref{T1}_2$ for the critical case.
Following Shao, Takamura and Wang \cite{STW2025}, we define
\begin{align*}
& H(t):=\int_{0}^t(t-s)ds\int_{s+R_0}^{s+R}x^{-1}u(x,s)dx
\end{align*}
for $ t\geq 0$. Then it is easy to get
\begin{align}\label{cht}
H''(t) & =\int_{t+R_0}^{t+R}x^{-1}u(x,t)dx\nonumber\\
&\geq \frac{1}{t+R} \int_{t+R_0}^{t+R} u(x,t)dx \nonumber\\
&  \geq \frac{\varepsilon}{2(t+R)} \int_{t+R_0}^{t+R} \left\{ f(x+t) + f(x-t) + \int_{x-t}^{x+t} g(y)dy \right\} dx\nonumber\\
& \ \ \ \ + \frac{1}{2(t+R)} F(t),
\end{align}
where
\begin{align*}
F(t) & = \int_{t+R_0}^{t+R} dx \int_0^t ds \int_{x-t+s}^{x+t-s} \frac{|u_x(y,s)|^p}{ \langle y\rangle   } dy, \ \ \text{for} \ a=1.
\end{align*}
Noting the assumption for the initial data in Theorem \ref{thm1}, we obtain
\begin{align}\label{H1}
H^{\prime\prime}(t)\geq\frac{\varepsilon}{2(t+R)} \int_{t+R_0}^{t+R}f(x-t)dx
\geq \frac{C_f\varepsilon}{2(t+R)},
\end{align}
where
\begin{align*}
& C_f=  \int_{R_0}^{R}f(y)dy.
\end{align*}
Noting that $ H^{\prime}(0)=0 $, it then holds from \eqref{H1}
\begin{align*}
H'(t) & \ge \frac{C_f\varepsilon}{2} \int_{0}^{t} \frac{1} {s+R} ds + H^{\prime}(0)\\
& =  \frac{C_f\varepsilon}{2}  \log \frac{t+R}{R},
\end{align*}
which in turn results in
\begin{align}\label{ccf}
H(t) & \ge \frac{C_f\varepsilon}{2}  \int_{0}^{t}\log \frac{s+R}{R} ds+ H(0) \nonumber\\
&  \ge \frac{C_f\varepsilon}{2}  \int_{\frac{t}{2}}^{t}\log \frac{s+R}{R} ds  \nonumber\\
& \ge \frac{C_f\varepsilon}{4} t \log \frac{t+2R}{2R}  \nonumber\\
& \ge \frac{C_f\varepsilon}{4} t \log \frac{t}{2R}
\end{align}
for $t\geq 2R$.

On the other hand, similar to \eqref{ft} we have
\begin{align}\label{ch}
H''(t)& \geq \frac{1}{2(t+R)} F(t) \nonumber\\
& \geq \frac{1}{2t(t+R)} \int_{0}^{t} (t - s) ds \int_{s+R_0}^{s+R} (y - s - R_0) \frac{|u_x(y,s)|^p}{ \langle y\rangle} dy.
\end{align}
It is easy to see
\begin{align}\label{sy}
& \frac{1}{  \langle y\rangle } \geq  \frac{R_0}{2R}\frac{1}{(s+R_0)}=:\frac{D_{10}}{(s+R_0)}.
\end{align}
Combining \eqref{ch} and \eqref{sy} yields
\begin{align}\label{cch}
&H''(t) \geq \frac{D_{10} }{4t^2} \int_{0}^{t} \frac{(t - s)} { s+R_0} ds \int_{s+R_0}^{s+R}  (y - s - R_0) |u_x(y,s)|^p dy
\end{align}
for $ a=1, t\geq R $. Utilizing H$\ddot{\mbox{o}}$lder's inequality, we have
\begin{align}\label{cm}
|H(t)| & =  \left |  \int_{0}^t(t-s)ds\int_{s+R_0}^{s+R}y^{-1}u(y,s)dy \right| \nonumber\\
& \leq \left| \int_{0}^t \frac{t-s}{s+R_0} ds\int_{s+R_0}^{s+R} u(y,s)dy \right| \nonumber\\
& \leq \int_{0}^t \frac{t-s}{s+R_0} ds\int_{s+R_0}^{s+R} (y - s - R_0)|u_x(y,s)|dy \nonumber\\
& \leq   \left( \int_{ 0}^{t}\frac{t-s}{s+R_0} ds \int_{s+R_0}^{s+R}(y - s - R_0)|u_x(y,s)|^p  dy \right)^{1/p} M(t)^{1-1/p},
\end{align}
where
\begin{align*}
M(t)& := \int_{ 0}^{t}\frac{t-s}{s+R_0} ds \int_{s+R_0}^{s+R}(y - s - R_0) dy\\
& = \frac{1}{2} (R-R_0)^2  \int_{ 0}^{t}\frac{t-s}{s+R_0} ds\\
& = \frac{1}{2} (R-R_0)^2 \int_{R_0}^{t+R_0}\frac{t+R_0-m}{m} dm \\
& \leq \frac{1}{2} (R-R_0)^2 \left(t \log \frac{t+R_0}{R_0} + R_0 \log \frac{t+R_0}{R_0}\right)\\
& \leq  4R_1^2 t \log \frac{t+R_0}{R_0}
\end{align*}
for $ t\geq R_0 $. If $R_0>1$, the inequality $\log\frac{t+R_0}{R_0}\leq \log t$ holds provided that $t\geq \frac{R_0}{R_0-1}$. When $R_0=1$, a direct computation yields $\log\frac{t+R_0}{R_0} = \log(t+1) \leq 2\log t$ for all $t\geq 2$. Consequently, we arrive at
\[
M(t) \leq 8R_1^2\, t \log t, \qquad t \geq 2R.
\]

Combining \eqref{cch} and \eqref{cm}, we obtain
\begin{align}\label{H2}
H''(t) & \geq
D_{10} 4^{-1} 8^{-(p-1)} R_1^{-2(p-1)} t^{-2-(p-1)}  \log^{-(p-1)}t |H(t)|^p \nonumber\\
& \geq  D_{11} t^{-(p+1)} \log ^{-(p-1)}t |H(t)|^p,
\end{align}
with
\begin{align*}
& D_{11}:=D_{10} 4^{-1} 8^{-(p-1)}R_1^{-2(p-1)} .
\end{align*}
Integrating \eqref{H2} with respect to time twice yields
\begin{align}\label{frame_critical}
H(t) & \geq D_{11} \int_{2R}^tds\int_{2R}^sr^{-(p+1)}\log^{-(p-1)}rH(r)^pdr
\end{align}
for  $t\ge 2R$.

The following lemma comes from Theorem 1 in \cite{SST2025}, which will be used to get the desired lifespan estimate.
\begin{lem}\label{lem3}
Let \( H \in C([R, T]) \) be a solution to
\begin{equation}
\label{seq*}
\left\{
\begin{aligned}
& H(t) \geq A t^a (\log t)^{-b} \left( \log \frac{t}{R} \right)^c, & t \in [R,T), \\
& H(t) \geq B (\log t)^x \int_R^t ds \int_R^s r^y \left( \log \frac{r}{R} \right)^z |H(r)|^p dr, & t \in [R,T),
\end{aligned}
\right.
\end{equation}
where all $A>0, B>0 $ and $ T>R>1 $ are constants. Assume that exponents $ a,b,c,x,y,z,p $ satisfy
\begin{equation}
    \begin{cases}
        \,p > 1, a \leq 1, \, b \geq \max\left\{0, \d\frac{x}{p-1}\right\}, \\
        \,y + pa = -1, z + cp > -1, \, z + cp \geq c-1.
    \end{cases}
\end{equation}
\( T \) has to satisfies
    \begin{equation}
        T \leq \exp\left(\max\left\{2\log R_{\infty}, (A^{-1}D)^{\frac{p-1}{x+z+1+(c-b)(p-1)}}\right\}\right),
    \end{equation}
    where
    \begin{align*}
        &R_{\infty}= R \prod_{k=1}^{\infty}(1 + 2^{-k}), \\
        &D= 2^{\left(c+\frac{p+(z+1)(p-1)}{(p-1)^2}\right)} p^{\frac{p}{(p-1)^2}} \left(\frac{1}{B}\max\left\{c+\frac{z+1}{p-1}, c+\frac{z+1}{p}\right\}\right)^{\frac{1}{p-1}} > 0.
    \end{align*}
\end{lem}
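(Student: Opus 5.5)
I would prove Lemma \ref{lem3} by the iteration argument combined with the slicing method of Agemi, Kurokawa and Takamura, as in \cite{SST2025}. Set $\ell_k:=\prod_{j=1}^{k}(1+2^{-j})$, so $\ell_0=1$, $\ell_k\uparrow \ell_\infty:=R_\infty/R$, and work on the shrinking intervals $[R\ell_k,T)$. The claim to establish by induction is
\begin{equation*}
H(t)\ge C_k\, t^{a}(\log t)^{-b_k}\Bigl(\log\frac{t}{R\ell_k}\Bigr)^{c_k},\qquad t\in[R\ell_k,T),
\end{equation*}
with $C_0=A$, $b_0=b$, $c_0=c$. The first inequality in \eqref{seq*} gives the case $k=0$, because $\log(t/R)\ge\log(t/(R\ell_0))$.

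For the inductive step, substitute the bound into the second inequality of \eqref{seq*}, restricting the $r$-integral to $[R\ell_k,s]$. The key exponent condition $y+pa=-1$ makes the integrand equal to $r^{-1}(\log r)^{-pb_k}(\log\frac rR)^{z}(\log\frac{r}{R\ell_k})^{pc_k}$. On the range in question $(\log r)^{-pb_k}\ge(\log s)^{-pb_k}$, and $(\log\frac rR)^z$ is compared with $(\log\frac r{R\ell_k})^z$: directly if $z\ge0$, and via $(\log\frac rR)^z\ge(\log r)^{z}$ (absorbed into the $\log t$ power) if $z<0$. Then the $dr$ integral is explicit:
\begin{equation*}
\int_{R\ell_k}^{s}\frac{1}{r}\Bigl(\log\frac{r}{R\ell_k}\Bigr)^{z+pc_k}dr=\frac{(\log\frac{s}{R\ell_k})^{z+pc_k+1}}{z+pc_k+1},
\end{equation*}
which is where $z+cp>-1$ is needed. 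For the outer $ds$ integral we need to produce the factor $t^{a}$. For $s\in[t/(1+2^{-(k+1)}),t]$ we have $\log\frac{s}{R\ell_k}\ge\log\frac{t}{R\ell_{k+1}}$, and this interval has length $\ge 2^{-(k+2)}t$. This gives a factor $t\ge t^{a}$ when $t\ge1$, using $a\le1$. This slicing step is what avoids the logarithmic losses from integrating $1/r$ exactly.

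The resulting recursion is $c_{k+1}=pc_k+z+1$, $b_{k+1}=pb_k-x$ (adjusted by $z^-$ if $z<0$), and $C_{k+1}\ge B\,2^{-(k+2)}C_k^p/(z+pc_k+1)$. The conditions $z+cp\ge c-1$ and $b\ge \max\{0,x/(p-1)\}$ guarantee $c_k\ge c$ and $b_k\ge0$ increasing. Solving explicitly gives $c_k=p^k(c+\frac{z+1}{p-1})-\frac{z+1}{p-1}$ and $b_k=p^k(b-\frac{x}{p-1})+\frac{x}{p-1}$. The constant satisfies $z+pc_k+1\le p^{k+1}\max\{c+\frac{z+1}{p-1},\,c+\frac{z+1}{p}\}$, so the logarithmic recursion $\log C_{k+1}\ge p\log C_k-(k+2)\log 2-(k+1)\log p+\log(B/\max\{\cdot\})$ yields $C_k\ge \exp\bigl(p^k(\log A-\log D')\bigr)$ up to factors of order $p^{O(k)}$, as in Lemma \ref{lem1}.

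Finally, take $t\ge R_\infty^2$, so that $\log\frac{t}{R\ell_k}\ge\log\frac{t}{R_\infty}\ge\frac12\log t$. Collecting the $p^k$-powers, the lower bound becomes $\bigl(A D^{-1}(\log t)^{\,c-b+\frac{x+z+1}{p-1}}\bigr)^{p^k}$ times factors growing at most geometrically. Hence $H(t)\to\infty$ as $k\to\infty$ whenever $(\log t)^{(x+z+1+(c-b)(p-1))/(p-1)}>A^{-1}D$. This is the stated bound on $T$; the $\max$ with $2\log R_\infty$ accounts for the restriction $t\ge R_\infty^2$. I expect the main obstacle to be the bookkeeping of the constant $D$ precisely: tracking the factors $2^{-(k+2)}$, the $2^{c_k}$ loss from $\log\frac{t}{R\ell_k}\ge\frac12\log t$, and the treatment of the sign of $z$ and $x$ in the $(\log t)$ powers so that exactly the exponent $c+\frac{p+(z+1)(p-1)}{(p-1)^2}$ of $2$ appears.
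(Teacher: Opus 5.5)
\textbf{Overall.} Your scheme matches the iteration-with-slicing argument of \cite{SST2025}. That scheme is a product slicer $\ell_k$, the frame $C_k t^a(\log t)^{-b_k}(\log\frac{t}{R\ell_k})^{c_k}$, an exact $dr$-integral via $y+pa=-1$, a slice of the $ds$-integral, and a stop at $t\ge R_\infty^2$. The paper gives no proof of its own here; it imports the lemma from \cite{SST2025}, and the form of $R_\infty$ reflects this scheme. For $z\ge0$ your argument is sound. It yields blow-up once $(\log t)^{e/(p-1)}$ exceeds $A^{-1}$ times a constant, where $e:=x+z+1+(c-b)(p-1)$. However, your last step asserts the lemma's constant $D$, and your recursion does not deliver it.

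\textbf{The constant for $z\ge0$.} Put $\mu:=\max\{c+\frac{z+1}{p-1},c+\frac{z+1}{p}\}$. Your recursion reads $C_{k+1}\ge\frac{B}{2\mu}(2p)^{-(k+1)}C_k^p$. Hence the coefficient of $p^k$ in $\log C_k$ is $\log A-\frac{1}{p-1}\log\frac{2\mu}{B}-\frac{p}{(p-1)^2}\log(2p)$. Combined with the loss $2^{-c_k}$, the threshold you actually obtain is
\[
2^{c+\frac{z+1}{p-1}}(2p)^{\frac{p}{(p-1)^2}}\Bigl(\frac{2\mu}{B}\Bigr)^{\frac{1}{p-1}}=2^{\frac{1}{p-1}}D .
\]
The stated $D$ corresponds exactly to a per-step factor $\frac{B}{\mu}(2p)^{-(k+1)}$, i.e. a slice of length $2^{-(k+1)}t$. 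Your slice $[t/(1+2^{-(k+1)}),t]$ has length $2^{-(k+1)}t/(1+2^{-(k+1)})$. For $a=1$ there is no spare factor $t^{1-a}$, so sharper bookkeeping of this slice still leaves a factor larger than $1$. As written, you prove the lemma with $D$ replaced by $2^{1/(p-1)}D$. That is harmless for the application to \eqref{frame_critical}, where only the form $\exp(C\varepsilon^{-(p-1)})$ matters, but it is not the statement as given.

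\textbf{The case $z<0$.} This case is not handled consistently. After $(\log\frac{r}{R})^z\ge(\log r)^z\ge(\log t)^z$, the inner integral is $\int_{R\ell_k}^s r^{-1}(\log\frac{r}{R\ell_k})^{pc_k}dr$. So the recursions become $c_{k+1}=pc_k+1$ and $b_{k+1}=pb_k-x-z$. Your displayed integral with exponent $z+pc_k$ and your closed form for $c_k$ then no longer apply.
\begin{itemize}
\item The exponent $e/(p-1)$ survives.
\item The denominators are now bounded by $p^{k+1}(c+\frac{1}{p-1})$, and the final loss is essentially $2^{-p^k(c+\frac{1}{p-1})}$.
\item You therefore get a constant with $c+\frac{1}{p-1}$ in place of both $\mu$ and $c+\frac{z+1}{p-1}$, which is strictly larger than $D$.
\item The branch $c+\frac{z+1}{p}$ of the max, active only when $z<-1$, never arises in your argument.
\end{itemize}
Keeping $z$ in the $c$-recursion would require $(\log\frac{r}{R})^z\gtrsim(\log\frac{r}{R\ell_k})^z$ near $r=R\ell_k$. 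That fails for $z<0$, so a different device is needed there.

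\textbf{Two smaller points.} First, converting $(\log t)^{e/(p-1)}>A^{-1}D$ into the stated bound on $T$ tacitly uses $e>0$. The hypotheses do not force this: they only give $e=[c(p-1)+z+1]+[x-b(p-1)]$, with brackets of opposite signs. In the paper's application $e=1$, so this does not matter there. Second, the step $(\log\frac{t}{R\ell_k})^{c_k}\ge(\frac12\log t)^{c_k}$ needs $c_k\ge0$. This holds for all large $k$ and should be said.
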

Employing Lemma \ref{lem3} to \eqref{ccf} and \eqref{frame_critical}, and setting
\[
a=1,\ b=0,\ c= 1,\ x=-(p-1),\ y=-(p+1),\ z= 0
\]
which satisfy
\[
\begin{cases}
p > 1, \ a \leq 1, \ b \geq \max\left\{0, -1\right\}, \\
y + pa = -1, \ z + cp > -1, \ z + cp \geq 0,
\end{cases}
\]
 we derive the upper bound of lifespan to the solution of \eqref{IVP10} for the critical case $a=1$
\begin{align*}
T(\varepsilon)\leq\exp\left(\max\left\{2\log R_\infty,C_2\e^{-(p-1)}\right\} \right),
\end{align*}
where
\[
C_2= \left[  C_f^{-1} 2 ^{\frac{2 p-1}{(p-1)^2}+3} p^{\frac{p}{(p-1)^2}} D_{11}^{-1} \left(\frac{p}{p-1}\right)^{\frac{1}{p-1}} \right]^{p-1}\ \mbox{and}\ R_{\infty} = 2R \prod_{k=1}^{\infty} (1 + 2^{-k}).
\]
Hence, we obtain the lifespan estimate $\eqref{T1}_2$ for $a=1$
\begin{align*}
T(\varepsilon)\leq\exp\left(C_2 \e^{-(p-1)} \right),
\end{align*}
for $\e\in(0,\e_1]$, where
\begin{align}\label{e12}
\varepsilon_1= ( 2\log R_\infty)^{-\frac{1}{p-1}}  ( C_2 )^{\frac{1}{p-1}}
\end{align}
in this case.

\subsection{Proof of Theorem \ref{thm1} under the assumption (\ref{fg2})}
\label{sec12}
In this subsection, we are going to prove that Theorem \ref{thm1} also holds for
\begin{align}\label{g0}
&f(x), g(x)\geq0  \ \mbox{and} \ g(x)\not\equiv 0.
\end{align}

\par\noindent
\textbf{The subcritical case}.

From \eqref{hde10}, we have
\begin{align}\label{sg1}
H''(t) &\geq \frac{D_5 \varepsilon}{2t^{ a/p}} \int_{t + R_0}^{t + R} dx \int_{x - t}^{x + t} g(y) \, dy \nonumber\\
&\geq \frac{D_5 \varepsilon}{2t^{ a/p}} \int_{t + R_0}^{t + (R+R_0)/2} dx \int_{x - t}^{x + t} g(y) \, dy \nonumber\\
&\geq \frac{D_5 \varepsilon}{2t^{ a/p}} \int_{t + R_0}^{t + (R+R_0)/2} dx \int_{(R + R_0)/2}^{R + R_0} g(y) \, dy,
\end{align}
where we assume that
\[
2t + R_0 > \frac{R + R_0}{2} \Longleftrightarrow t \geq \frac{R_1}{2}, \quad R_1 := \frac{R - R_0}{2}.
\]
It further yields by \eqref{sg1}
\begin{align}\label{sg2}
H''(t) \geq \frac{C_g D_5 \varepsilon}{2t^{ a/p}} \int_{t + R_0}^{t + (R+R_0)/2} dx \quad \text{for } t \geq R_1,
\end{align}
where
\[
C_g := \int_{(R + R_0)/2}^R g(y) \, dy > 0.
\]
From the initial condition \eqref{g0}, we have $u > 0 $ for $ t \geq 0 $, combined with \eqref{hde10} and \eqref{Ft} yields $H''(t) > 0$ for $t\ge 1$.
Noting that $H'(1)=0$ and setting $E_1=\max\{1,R_1\}$, we see that $H'(E_1)>0$ if $R_1>1$ and $H'(E_1)=0$ if $0 <R_1\le1$. This implies  $H'(E_1)\ge0$ for $ t \geq E_1$. Hence integrating \eqref{sg2} yields
\begin{align*}
	H'(t) & \geq \frac{C_g D_5 R_1 \varepsilon}{2} \int_{E_1}^t s^{ -a/p} \, ds + H'(E_1) \\
	& \geq \frac{C_g D_5 R_1 \varepsilon}{2(1 -  a/p)} \left[ s^{1 -  a/p} \right]_{E_1}^{t} \quad \text{for } t \geq E_1.
\end{align*}
Also noting that \( 1 -  a/p > 0 \), which leads to
\[
H(t) \geq \frac{C_g D_5 R_1 \varepsilon}{2(1 - a/p)} \int_{E_1}^t \left[ s^{1 -a/p} - E_1^{1 - a/p} \right] ds + H(E_1) \quad \text{for } t \geq E_1,
\]
which further yields
\[
H(t) \geq \frac{(1 - (1/2)^{1 -  a/p}) C_g D_5 R_1 \varepsilon}{2(1 -  a/p)} \int_{2E_1}^t s^{1 - a/p} \, ds \quad \text{for } t \geq 2E_1,
\]
and hence
\begin{align}\label{cg}
H(t) & \geq \frac{(1 - (1/2)^{1 -  a/p})(1 - (1/2)^{2 - a/p}) C_g D_5 R_1 \varepsilon}{2(1 -  a/p) (2 -  a/p)} t^{2 -  a/p}\nonumber\\
& \geq D_{12}\varepsilon t^{2 - a/p},\quad \text{for } t \geq 4E_1
\end{align}
with
$$ D_{12}:= \frac{(1 - (1/2)^{1 -  a/p})(1 - (1/2)^{2 - a/p}) C_g D_5 R_1 }{2(1 -  a/p) (2 -  a/p)}.$$

From \eqref{frame10}, we have
\begin{align}
\label{g1}
H(t) & \geq D_9 \int_{R_1}^tds\int_{R_1}^sr^{1-2p-a/p}|H(r)|^pdr \nonumber\\
&  \geq D_9 \int_{4E_1}^tds\int_{4E_1}^sr^{1-2p-a/p}|H(r)|^pdr
\quad \text{for } t \geq 4E_1.
\end{align}
Employing  Lemma \ref{lem1} with $ E=4E_1 $ to \eqref{cg} and \eqref{g1}, we then conclude that the solution of the Cauchy problem \eqref{IVP10} cannot exist if $T$ satisfies
$$ T > C_1 \e^{-(p-1)/(1-a)} $$
with
$$ C_1= \left(  \frac{ p^{\frac{2p}{(p-1)^2}} 2^{\frac{2p+1-a}{p-1}} (2p+1-a )^{\frac{2}{p-1}} }{ (D_9(p-1)^2)^{\frac{1}{p-1} } D_{12}} \right)^{\frac{p-1}{1-a}}.$$
Moreover $ \varepsilon$ satisfies $ \varepsilon\in(0, \varepsilon_1]  $, where
\begin{align}\label{e11}
\varepsilon_1=( 8E_1)^{-\frac{1-a}{p-1}} (C_1)^{\frac{1-a}{p-1}}.
\end{align}
This condition guarantees that  $ C_1 \e^{-(p-1)/(1-a)}  > 8E_1 $.

\par\noindent
\textbf{The critical case}.

By \eqref{cht} it holds
\begin{align*}
H''(t) &\geq \frac{ \varepsilon}{2(t+R)} \int_{t + R_0}^{t + R} dx \int_{x - t}^{x + t} g(y) \, dy.
\end{align*}
Similar to \eqref{sg2}, we have
\begin{align*}
H''(t) &\geq  \frac{ \varepsilon}{2(t+R)} \int_{t + R_0}^{t + (R+R_0)/2} dx \int_{(R + R_0)/2}^{R + R_0} g(y) \, dy\\
& \geq \frac{C_g \varepsilon}{2(t+R)} \int_{t + R_0}^{t +(R+R_0)/2} dx,
\end{align*}
where
\[
C_g := \int_{(R + R_0)/2}^R g(y) \, dy > 0.
\]
Integrating the above inequality over $[R_1,t]$ yields
\[
H'(t) \geq \frac{C_g  R_1 \varepsilon}{2} \int_{R_1}^t \frac{1}{s+R} \, ds + H'(R_1) \quad \text{for } t \geq R_1,
\]
which further yields
\[
H'(t) \geq \frac{C_g  R_1 \varepsilon}{2} \log \frac{t+R}{R_1+R} \geq  \frac{C_g  R_1 \varepsilon}{2} \log \frac{t+R}{2R}\quad \text{for } t \geq R_1,
\]
and
\begin{align}\label{ccg}
H(t) & \geq \frac{C_g  R_1 \varepsilon}{2} \int_{R_1}^t \log \frac{s+R}{2R}+ H(R_1)\nonumber\\
& \geq \frac{C_g  R_1 \varepsilon}{2} \int_{t/2}^{t} \log \frac{s+R}{2R} ds\nonumber\\
& \geq \frac{C_g  R_1 \varepsilon}{2} \frac{t}{2}  \log \frac{t+2R}{4R}\nonumber\\
& \geq \frac{C_g  R_1 \varepsilon}{4} t  \log \frac{t}{4R},~~~for~  t \geq 2R_1.
\end{align}

On the other hand, by \eqref{frame_critical}, we have
\begin{align}
\label{19-1}
H(t) & \geq D_{11} \int_{R}^tds\int_{R}^sr^{-(p+1)}\log^{-(p-1)}rH(r)^pdr \nonumber\\
& \geq D_{11} \int_{4R}^tds\int_{4R}^sr^{-(p+1)}\log^{-(p-1)}rH(r)^pdr,~~~for~  t\ge 4R.
\end{align}
Employing Theorem 1 in \cite{SST2025} to \eqref{ccg} and \eqref{19-1}, we derive the upper bound lifespan of the solution to the Cauchy problem \eqref{IVP10}
\begin{align*}
T(\varepsilon)\leq\exp\left(\max\left\{2\log R_\infty,C_2\e^{-(p-1)}\right\} \right),
\end{align*}
where $ C_2= \left[  C_g^{-1}R_1^{-1} 2 ^{\frac{2 p-1}{(p-1)^2}+3} p^{\frac{p}{(p-1)^2}} D_{11}^{-1} \left(\frac{p}{p-1}\right)^{\frac{1}{p-1}} \right]^{p-1}$ and $ R_{\infty} = 4R \prod_{k=1}^{\infty} (1 + 2^{-k}).$
For $\e\in(0,\e_1]$, we obtain that  $ T $ satisfies
\begin{align*}
T(\varepsilon)\leq\exp\left(C_2 \e^{-(p-1)} \right),
\end{align*}
where
\begin{align}\label{e12}
\varepsilon_1= ( 2\log R_\infty)^{-\frac{1}{p-1}}  ( C_2 )^{\frac{1}{p-1}}
\end{align}
in this case.


\section{Proof of Theorem \ref{thm2}}
In this section we give the proof to Theorem \ref{thm2},
which demonstrates an alternative proof for the blow-up result for the Cauchy problem \eqref{IVP10} under different assumption for the initial data. Before proceeding with the main proof, we first establish a key lemma.

\begin{lem}[extended Li-Zhou theorem]
\label{2lem1}
Let $a\in\R,a\leq1$ and $p>1$.
 Assume that a function $G=G(t)\in C([E,T))$ satisfies
\begin{align}
\label{2lem11}&G(t)\geq M_1,\\
\label{2lem12}& G(t)\geq M_2 \int_E^te^{-2s}ds\int_E^s \frac{e^{2r}|G(r)|^p} {(1+r)^{a}}dr
\end{align}
for some positive constants $E, M_1$, $M_2$.
Then there exists a positive constant $M=M(M_2,p,a)$ such that $G(t) $ cannot exists for
\[
T > \left\{
\begin{array}{ll}
 (M M_1^{-1})^{(p-1)/(1-a)}, & \mbox{if}\ a<1,\\
\exp \left(  (M_1^{-1} M)^{p-1}  \right), & \mbox{if}\ a=1,
\end{array}
\right.
\]
where
\[
M = \left\{
\begin{array}{ll}
  (2p)^{\frac{4p-2}{(p-1)^2}} 2^{\frac{(1-a)(p-1+p^{n_{0}})}{(p-1)^2}} (1-a )^{\frac{1}{p-1}}  (M_2(p-1))^{-\frac{1}{p-1}},  & \mbox{if}\ a\leq0,\\
 (2p)^{\frac{4p-2}{(p-1)^2}} 2^{\frac{(1+a)(p-1)+p^{n_{0}}}{(p-1)^2}} \left(   M_2(p-1) \right)^{-\frac{1}{p-1}}, & \mbox{if}\ 0<a<1,\\
 2^{ \frac{2p-1}{(p-1)^2}} p^{\frac{p}{(p-1)^2}} \left(\frac{4}{M_2(p-1)}\right)^{\frac{1}{p-1}}, & \mbox{if}\ a=1. \\
\end{array}
\right.
\]
\end{lem}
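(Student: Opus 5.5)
Since $G\ge M_1>0$ and the right-hand side of \eqref{2lem12} has the kernel $e^{-2s}\int_E^s e^{2r}(\cdots)dr$, the plan is an iteration argument in the spirit of the proof of Lemma \ref{lem1}, now driven by the lower bound $G(t)\ge C_n (t-\ell_n)^{a_n}$ on a shrinking sequence of half-lines $t\ge \ell_n$. This is the slicing method of Agemi, Kurokawa and Takamura, with $\ell_n = 2E\prod_{k=1}^n(1+2^{-k})$ increasing to a finite limit $\ell_\infty$. The inner integral $\int_E^s e^{2r}r^{\alpha}dr$ has to be bounded below by something like $e^{2s}s^{\alpha}$ times a factor; integrating $e^{2r}$ gains nothing in the power of $s$, only a constant. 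So the kernel acts essentially like $\int^t ds$ applied once, i.e. one integration instead of two, which is the damped-wave (Li--Zhou) structure.

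\textbf{Subcritical case $a<1$.} Assume $G(t)\ge C_n (t-\ell_n)^{a_n}$ for $t\ge \ell_n$. For $s\ge \ell_{n+1}$, restrict the inner integral to $r\in[s-\delta_n,s]$ with $\delta_n\sim 2^{-n}$, or more robustly to $r\in[\max(\ell_n,s-1),s]$, where $e^{2r}\ge e^{2s-2}$ and $(r-\ell_n)$ is comparable to $(s-\ell_n)$ up to the slicing ratio. This gives
\[
e^{-2s}\int_E^s \frac{e^{2r}G(r)^p}{(1+r)^a}\,dr \ \ge\ c\,2^{-n p a_n}\,C_n^p\,(s-\ell_{n+1})^{pa_n}(1+s)^{-a},
\]
using $(1+r)^{-a}\ge c(1+s)^{-a}$ for $a\ge0$ and $(1+r)^{-a}\ge (1+r)^{-a}$ monotone for $a\le 0$ (again with $r\ge s/2$). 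The two signs of $a$ are treated separately, which explains the two formulas for $M$. An additional slicing on $s$ absorbs the weight $(1+s)^{-a}$ into $(s-\ell_{n+1})^{-a}$ and $s^{-a}$. Integrating in $s$ then yields $a_{n+1}=pa_n+1-a$ and $C_{n+1}\ge M_2 c\, C_n^p/(2^{\kappa n} a_{n+1})$. Solving gives $a_n\sim \frac{1-a}{p-1}p^n$ and $\log C_n\ge p^{n-1}\big(\log M_1 - K\big)$, exactly as in \eqref{cn}. Hence $G(t)\ge \exp\!\big(p^{n-1}\log(M_1 M^{-1} t^{(1-a)/(p-1)})\big)\times(\text{lower order})$, which diverges as $n\to\infty$ once $t>(MM_1^{-1})^{(p-1)/(1-a)}$. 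The factors $2^{\cdots p^{n_0}}$ in $M$ come from fixing an index $n_0$ after which the slicing losses are summable.

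\textbf{Critical case $a=1$.} The power gain vanishes, so I would iterate instead with logarithms: $G(t)\ge C_n(\log(t/\ell_n))^{a_n}$ for $t\ge\ell_n$. The same inner-integral estimate gives a factor $(1+s)^{-1}$, whose $s$-integral produces $\log$, so $a_{n+1}=pa_n+1$ and $C_{n+1}\ge c\,M_2 C_n^p/(2^{\cdot n}(pa_n+1))$. Divergence then holds when $M_1\,(\log t)^{1/(p-1)}>M$, matching $T\le\exp((M_1^{-1}M)^{p-1})$. This is the same bookkeeping as Lemma \ref{lem3} with $b=0$, $c=0$, $x=0$.

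\textbf{Main obstacle.} The main obstacle is the lower bound on the inner exponential integral, since no power of $s$ may be lost. The restriction to a window of length $\sim1$ (or $\sim 2^{-n}$) below $s$ must cost only a geometric factor $2^{-O(n)}$, not something like $e^{-2^n}$. The window of length $1$ avoids this at the price of requiring $s-1\ge\ell_n$, which the slicing gap $\ell_{n+1}-\ell_n=2E\cdot 2^{-(n+1)}\prod(\cdots)$ guarantees only for $n\le n_0$. For $n>n_0$ I would instead use the window $[s-(\ell_{n+1}-\ell_n),s]$ and check that $1-e^{-2(\ell_{n+1}-\ell_n)}\gtrsim 2^{-n}$ still gives a geometric loss.
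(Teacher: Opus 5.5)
\textbf{There is a genuine gap in the key step, although your own last paragraph contains the repair.} Your outline matches the paper: iteration with slicing, $a\le0$ and $0<a<1$ treated separately, and a logarithmic ansatz for $a=1$. The step that carries everything, however, fails as written. Your displayed lower bound for $e^{-2s}\int_E^s e^{2r}(1+r)^{-a}G(r)^p\,dr$ carries the loss $2^{-npa_n}$. Since $a_n=\frac{1-a}{p-1}(p^{n-1}-1)$, this adds about $-c\,n\,p^{n}$ to $\log C_{n+1}$. After normalizing by $p^{n}$, the accumulated losses are $\sum_n c\,n=\infty$, so your lower bound for $p^{1-n}\log C_n$ tends to $-\infty$ and no blow-up follows.

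The recurrence $C_{n+1}\ge M_2c\,C_n^p/(2^{\kappa n}a_{n+1})$ that you state next does not follow from that display. The same objection applies to any comparison $r-\ell_n\ge\theta(s-\ell_n)$ with fixed $\theta<1$, which costs $\theta^{pa_n}$.

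The cure is the window from your last paragraph, used for \emph{every} $n$. If $s\ge\ell_{n+1}$ and $r\in[s-(\ell_{n+1}-\ell_n),s]$, then exactly $r-\ell_n\ge s-\ell_{n+1}$, so no power is lost. The only price is $\frac12\bigl(1-e^{-2(\ell_{n+1}-\ell_n)}\bigr)\ge\frac{\ell_{n+1}-\ell_n}{1+2(\ell_{n+1}-\ell_n)}\ge 2^{-n}/3$ for $E\ge1$; the paper's proof also assumes $E\ge1$. This window also handles $a=1$, because $r\ge s-(\ell_{n+1}-\ell_n)$ and $s\ge\ell_{n+1}$ give $\log(r/\ell_n)\ge\log(s/\ell_{n+1})$.

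You should drop the length-one window. It needs $\ell_{n+1}-\ell_n\ge1$, i.e.\ $n\lesssim\log_2E$, so any $n_0$ it produces depends on $E$. It therefore cannot explain the factor $2^{p^{n_0}}$ in $M=M(M_2,p,a)$. In the paper that factor has a different origin. The paper slices multiplicatively, $r\in[\,l_ns/l_{n+1},s\,]$ with $l_n=\prod_{i\le n}(1+(2p)^{-i})$. This only gives $r-l_nE\ge\frac{l_n}{l_{n+1}}(s-l_{n+1}E)$ and leaves a factor $(l_n/l_{n+1})^{m_np-a}$. That factor is bounded below uniformly because the increments $(2p)^{-n}$ decay faster than $m_n=O(p^n)$ grows. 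Your corrected additive window avoids that factor entirely.

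\textbf{The second gap is the case $0<a<1$.} Your absorption of $(1+s)^{-a}$ into $(s-\ell_{n+1})^{-a}$ goes the wrong way when $a>0$, since then $(1+s)^{-a}\le(s-\ell_{n+1})^{-a}$. It is legitimate only for $a\le0$, where $(1+s)^{|a|}\ge(s-\ell_{n+1})^{|a|}$, and there it is exactly the paper's step. To reach the single-exponent recursion $a_{n+1}=pa_n+1-a$ for $a>0$, you would need two things you have not supplied:
\begin{itemize}
\item a further cut $s\ge(1+\theta_n)\ell_{n+1}$ making $s-\ell_{n+1}\gtrsim\theta_n s$;
\item a separate treatment of the first step, where the exponent $pa_1-a=-a$ is negative.
\end{itemize}
The paper's route is simpler. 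It uses $(1+r)^{-a}\ge(1+t)^{-a}$, pulls this factor out, and iterates the two-exponent ansatz $G(t)\ge T_n(t-l_nE)^{h_n}(1+t)^{-j_n}$ with $h_{n+1}=ph_n+1$ and $j_{n+1}=pj_n+a$. For $t\ge 2l_\infty E$, the net exponent $h_n-j_n=\frac{1-a}{p-1}(p^{n-1}-1)$ gives the same threshold in $t^{(1-a)/(p-1)}$.
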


\begin{rem}
\label{rem:Li-Zhou}
(\ref{2lem12}) comes from (\ref{Gt}) below which is exactly same as (3.18)
in Theorem 3.1 by Li and Zhou \cite{LZ1995}.
But, as stated in Introduction, our result is new for $a<0$.
\end{rem}

\par\noindent
\textbf{Proof}.
\textbf{The subcritical case: $a<1$}.
We divide the proof into two cases:\\
Following Chen and Palmieri \cite{CP2020}, or its variants of their method in the subsequent papers \cite{CP2021,CR2021,P2021,PT2022,PT2023}, we employ the product-type slicer to apply the slicing method in the iteration argument to \eqref{2lem12} which has exponential multipliers.\\
\textbf{Case 1:  $ a\leq0$}. First we introduce
$$ l_{n} := \prod_{i=1}^{n} \left(1 + \frac{1}{(2p)^{i}}\right).  $$
Then for $E\ge 1$, we claim that
\begin{align}\label{23-1}
 G(t)\ge K_n(t-l_nE)^{m_n},~~~t\ge l_nE,
\end{align}
with the recurrence relations
\begin{equation}
\label{24-1}
\left\{
\begin{aligned}
& m_{n+1}= m_np+1-a,  \ \  m_1=0, \\
& K_{n+1}=\frac{M_2K_n^p}{(2p)^{n+2}(m_np-a+1)} \left( \frac{l_n}{l_{n+1}}  \right)^{m_np-a},\ \  K_1=M_1. \\
\end{aligned}
\right.
\end{equation}
Obviously if $\eqref{24-1}_1$ holds, then $ m_n \geq0 $ for all $ n\in \mathbb N $.

Substituting \eqref{23-1} into \eqref{2lem12} yields
\begin{align}\label{24-4}
G(t)& \geq M_2K_n^p \int_{l_{n+1}E}^te^{-2s}ds\int_{l_{n-1}s/l_{n}}^s \frac{e^{2r} (r-l_nE)^{m_np} } {(1+r)^{a}}dr \nonumber\\
& \geq M_2K_n^p \int_{l_{n+1}E}^te^{-2s}ds\int_{l_{n}s/l_{n+1}}^s \frac{e^{2r} (r-l_nE)^{m_np} } {(1+r)^{a}}dr \nonumber\\
& \geq M_2K_n^p \int_{l_{n+1}E}^te^{-2s}ds\int_{l_{n}s/l_{n+1}}^s e^{2r} (r-l_nE)^{m_np-a} dr \nonumber\\
& \geq M_2K_n^p \int_{l_{n+1}E}^t e^{-2s} \left(\frac{l_n}{l_{n+1}}s- l_nE\right)^{m_np-a} ds\int_{l_{n}s/l_{n+1}}^s e^{2r} dr
\end{align}
for $ t\ge l_{n+1}E$, which further yields
\begin{align}\label{26-1}
G(t)& \geq  \frac{M_2K_n^p}{2}  \int_{l_{n+1}E}^t\left(\frac{l_n}{l_{n+1}}s-l_nE\right)^{m_np-a} \left(1-e^{2(l_n/l_{n+1}-1)s}\right) ds.
\end{align}
Utilizing the inequality $ e^{-x}\leq \frac{1}{1+x} $ for $ x > 0 $ and the condition $ E\ge 1 $, we obtain
\begin{align}\label{26-2}
1-e^{2(l_n/l_{n+1}-1)s} &\ge1-\frac{1}{1+2(l_{n+1}-l_n)E} \nonumber\\
& =\frac{2(l_{n+1}-l_n)E}{1+2(l_{n+1}-l_n)E} \nonumber\\
&\geq \frac{2}{ (2p)^{n+1}+2 } \nonumber\\
& \geq \frac{2}{ (2p)^{n+2}}.
\end{align}
Combining \eqref{26-1} and \eqref{26-2}, we have
\begin{align*}
G(t)& \geq \frac{M_2K_n^p}{(2p)^{n+2}} \int_{l_{n+1}E}^t\left(\frac{l_n}{l_{n+1}}s-l_nE\right)^{m_np-a} ds\\
& \geq \frac{M_2K_n^p}{(2p)^{n+2}} \left( \frac{l_n}{l_{n+1}}  \right)^{m_np-a} \int_{l_{n+1}E}^t\left(s-l_{n+1}E\right)^{m_np-a} ds\\
& \geq \frac{M_2K_n^p}{(2p)^{n+2}(m_np-a+1)} \left( \frac{l_n}{l_{n+1}}  \right)^{m_np-a} \left(t-l_{n+1}E\right)^{m_np-a+1},
\end{align*}
which implies the claim \eqref{23-1} and \eqref{24-1}.
Iterating $\eqref{24-1}_1$ to $n=1$ yields
\begin{align*}
& m_{n+1}= p^{n} \frac{1-a}{p-1}-\frac{1-a}{p-1},
\end{align*}
which implies
\begin{align*}
& m_{n+1} \leq p^{n} \frac{1-a}{p-1}.
\end{align*}
Hence we have
\begin{align}\label{1122-1}
K_{n+1}&= \frac{M_2K_n^p}{(2p)^{n+2}m_{n+1}} \left( \frac{l_n}{l_{n+1}}  \right)^{m_{n+1}-1} \nonumber\\
& \geq  \frac{M_2K_n^p}{(2p)^{n+2}(p^{n} \frac{1-a}{p-1})} \left( \frac{l_n}{l_{n+1}}  \right)^{p^{n} \frac{1-a}{p-1}-1}.
\end{align}

The last factor in \eqref{1122-1} should be estimated delicately. Setting
\begin{align*}
b_{n}& = \left(\frac{l_{n}}{l_{n+1}}\right)^{p^{n}}  = \left(1 + \frac{1}{(2p)^{n+1}}\right)^{-p^{n}},
\end{align*}
then
\begin{align*}
	\log b_n &= -p^{n}\log\left(1+\frac{1}{(2p)^{n+1}}\right)\\
	& = -p^n\left[\frac{1}{(2p)^{n+1}} + o\left(\frac{1}{(2p)^{n+1}}\right)\right] \\
	&= -\frac{1}{2p}\left(\frac{1}{2}\right)^n + o\left(\left(\frac{1}{2}\right)^n\right).
\end{align*}
Since $\frac12<1$, it follows that
\[
\lim_{n\to\infty}\log b_n = \lim_{n\to\infty}\left[-\frac{1}{2p}\left(\frac12\right)^n + o\left(\left(\frac12\right)^n\right)\right] = 0,
\]
which yields
\[
\lim_{n\to\infty} b_n = 1.
\]
Hence there exists $n_0\in \mathbb{N}$ such that
\begin{equation}\label{bn1}
	\begin{aligned}
b_{n} = \left(\frac{l_{n}}{l_{n+1}}\right)^{p^{n}} \ge \left(\frac12\right)^{p^{n_0}},\ \ \text{for}\ n>n_0.
\end{aligned}
	\end{equation}
Also noting that
\[
\frac12\le \frac{l_{n}}{l_{n+1}}<1,
\]
then it holds that
\begin{equation}\label{bn2}
	\begin{aligned}
b_{n} = \left(\frac{l_{n}}{l_{n+1}}\right)^{p^{n}} \ge \left(\frac12\right)^{p^{n_0}},\ \ \text{for}\ n=1, 2, \cdots, n_0.
\end{aligned}
	\end{equation}
We then conclude from \eqref{bn1} and \eqref{bn2} that for all $n\in \mathbb{N}$
\begin{equation}\label{bn3}
	\begin{aligned}
b_{n} = \left(\frac{l_{n}}{l_{n+1}}\right)^{p^{n}} \ge \left(\frac12\right)^{p^{n_0}}.
\end{aligned}
	\end{equation}
Combining \eqref{1122-1} yields
\begin{align}\label{24-2}
K_{n+1} & \geq  \frac{M_2K_n^p}{(2p)^{n+2}(p^{n} \frac{1-a}{p-1})} \left( \frac{l_n}{l_{n+1}}  \right)^{p^{n} \frac{1-a}{p-1}-1}\nonumber\\
&= \frac{M_2K_n^p}{(2p)^{n+2}(p^{n} \frac{1-a}{p-1})} (b_n)^{\frac{1-a}{p-1}}\Bigg/\frac{l_{n}}{l_{n+1}}\nonumber\\
& \geq  \frac{M_2K_n^p}{2^{ \frac{p^{n_{0}}(1-a)}{p-1}  } (2p)^{n+2}(p^{n} \frac{1-a}{p-1})}  \nonumber\\
& \geq  \frac{M_3 K_n^p}{ (2p)^{2n}}
\end{align}
with
$$ M_3:=  \frac{M_2}{2^{ \frac{p^{n_{0}}(1-a)}{p-1}  }(2p)^{2}(\frac{1-a}{p-1})}. $$
This implies
\begin{align*}
\log K_{n+1} & \geq p \log K_n  - (2n)\log(2p) + \log M_3,
\end{align*}
which is equivalent to
\begin{align*}
& \log K_{n+1} - \frac{2 \log (2p)}{p - 1} (n+1) - \left( \frac{2 \log (2p)}{(p-1)^2} - \frac{\log M_3}{p-1} \right)\\
& \geq p \left( \log K_n - \frac{2 \log (2p)}{p - 1} n - \left( \frac{2 \log (2p)}{(p-1)^2} - \frac{\log M_3}{p-1}\right)\right).
\end{align*}
Therefore, we may iterate to $n=1$ to get
\begin{align}\label{24-3}
\log K_n \geq p^{n-1} \log \left( \frac{M_1 M_3^{\frac{1}{p-1}}}{ (2p)^{\frac{2p}{(p-1)^2}} } \right) + n \frac{2 \log (2p)}{p-1} + \log \left( \frac{ (2p)^{\frac{2}{(p-1)^2}} }{ M_3^{\frac{1}{p-1}} } \right).
\end{align}
We then get from \eqref{23-1} and \eqref{24-3}
\begin{align*}
& G(t) \geq  \left( \frac{M_1 M_3^{\frac{1}{p-1}}}{ (2p)^{\frac{2p}{(p-1)^2}} } \right)^{p^{n-1}} (2p)^{\frac{2n(p-1)+2}{(p-1)^2}} M_3^{-\frac{1}{p-1}}(t-l_nE)^{m_n}
\end{align*}
for $ t\ge l_nE $.

Setting
\[
 l_{\infty} := \prod_{i=1}^{\infty} \left(1 + \frac{1}{(2p)^{i}}\right),
\]
then we have for $ t\ge 2l_\infty E $
\begin{align*}
G(t)& \geq  \left( \frac{M_1 M_3^{\frac{1}{p-1}}}{ (2p)^{\frac{2p}{(p-1)^2}} } \right)^{p^{n-1}} (2p)^{\frac{2n(p-1)+2}{(p-1)^2}} M_3^{-\frac{1}{p-1}}(t-l_\infty E)^{m_n} \\
& \geq  \left( \frac{M_1 M_3^{\frac{1}{p-1}}}{ (2p)^{\frac{2p}{(p-1)^2}} } \right)^{p^{n-1}} (2p)^{\frac{2n(p-1)+2}{(p-1)^2}} M_3^{-\frac{1}{p-1}}\left(\frac{1}{2} t \right)^{m_n}\\
& \geq  \left( \frac{M_1 M_3^{\frac{1}{p-1}} t^{\frac{1-a}{p-1}}}{ (2p)^{\frac{2p}{(p-1)^2}} 2^{\frac{1-a}{p-1}} } \right)^{p^{n-1}} (2p)^{\frac{2n(p-1)+2 }{(p-1)^2}} M_3^{-\frac{1}{p-1}}\left(\frac{1}{2} t \right)^{-\frac{1-a}{p-1}}.
\end{align*}
Hence if $ t $ satisfies
$$ \frac{M_1 M_3^{\frac{1}{p-1}} t^{\frac{1-a}{p-1}}}{ (2p)^{\frac{2p}{(p-1)^2}} 2^{\frac{1-a}{p-1}} } >1,  $$
then $ G(t) \rightarrow \infty$ as $ n\rightarrow \infty $, which means the upper bound of lifespan satisfies
$$ T \leq  (M_1^{-1} M)^{\frac{p-1 }{1-a}}$$
with
$$ M= \frac{ (2p)^{\frac{4p-2}{(p-1)^2}} 2^{\frac{(1-a)(p-1+p^{n_{0}})}{(p-1)^2}} (1-a )^{\frac{1}{p-1}} }{ (M_2(p-1))^{\frac{1}{p-1} }} . $$

We now consider the second subcritical case.
\\
\textbf{Case 2:  $0<a<1$}. 
In this case we are going to establish
\begin{align}\label{th21}
 G(t)\ge T_n \frac{(t-l_nE)^{h_n}}{{(1+t)}^{j_n}} \quad\mbox{for}\ t\ge l_nE,
\end{align}
with non-negative $ h_n, j_n, T_n $  satisfying
\begin{equation}
\label{th22}
\left\{
\begin{aligned}
& h_{n+1}= h_np+1,  \ \  h_1=0, \\
& j_{n+1}=j_np+a,  \ \  j_1=0, \\
& T_{n+1}=\frac{M_2T_n^p}{(2p)^{n+2}(h_np+1)} \left( \frac{l_n}{l_{n+1}}  \right)^{h_np},\ \  T_1=M_1. \\
\end{aligned}
\right.
\end{equation}
Similar to \eqref{24-4}, we have
\begin{align}\label{th23}
G(t) & \geq M_2T_n^p \int_{l_{n+1}E}^te^{-2s}ds\int_{l_{n}s/l_{n+1}}^s \frac{e^{2r} (r-l_nE)^{h_np} } {(1+r)^{j_np+a}}dr,
\end{align}
which yields for $0<a<1 $
\begin{align}
G(t) & \geq M_2T_n^p (1+t)^{-j_np-a} \int_{l_{n+1}E}^te^{-2s}ds\int_{l_{n}s/l_{n+1}}^s e^{2r} (r-l_nE)^{h_np} dr,
\end{align}
which further yields
\begin{align*}
 G(t) & \geq \frac{M_2T_n^p}{(2p)^{n+2} (h_n p+1) } \left(\frac{l_n}{l_{n+1}}\right)^{h_n p} (1+t)^{-j_np-a} (t-l_{n+1} E)^{h_n p+1}.
\end{align*}
Hence we prove the inequality \eqref{th21} with \eqref{th22}.

Iterating $\eqref{th22}_1$ and $\eqref{th22}_2$ to $n=1$ we get
\begin{align}\label{th25}
& h_{n}= \frac{p^{n-1}}{p-1}-\frac{1}{p-1},  \\
\label{jn} & j_{n}= p^{n-1}\frac{a}{p-1}-\frac{a}{p-1},
\end{align}
combining \eqref{bn3} yields
\begin{align}\label{24-2}
T_{n+1} & = \frac{M_2T_n^p}{(2p)^{n+2}(h_np+1)} \left( \frac{l_n}{l_{n+1}}  \right)^{h_np} \nonumber\\
& \geq \frac{M_2(p-1)T_n^p}{(2p)^{n+2}p^{n}} \left( \frac{l_n}{l_{n+1}}  \right)^{\frac{p^n}{p-1}}\Bigg/ \left(\frac{l_{n}}{l_{n+1}}\right)^{\frac{p}{p-1}} \nonumber\\
& \geq  \frac{M_2(p-1)}{2^{ \frac{p^{n_0}}{p-1}}(2p)^{2}(2p)^{2n} } T_n^p \nonumber\\
& =   \frac{M_4}{ (2p)^{2n}}T_n^p,
\end{align}
where
$$ M_4 := \frac{M_2(p-1)}{2^{ \frac{p^{n_0}}{p-1}} (2p)^{2}} .$$
Similar to \eqref{24-3}, we obtain from \eqref{24-2}
\begin{align}\label{th24}
\log T_n \geq p^{n-1} \log \left( \frac{M_1 M_4^{\frac{1}{p-1}}}{ (2p)^{\frac{2p}{(p-1)^2}} } \right) + n\frac{2 \log (2p)}{p-1} + \log \left( \frac{ (2p)^{\frac{2}{(p-1)^2}} }{ M_4^{\frac{1}{p-1}} } \right).
\end{align}
Combining \eqref{th21} and \eqref{th24}, we have
\begin{align*}
G(t) & \ge T_n \frac{(t-l_nE)^{h_n}}{{(1+t)}^{j_n}} \\
&  \geq  \left( \frac{M_1 M_4^{\frac{1}{p-1}}}{ (2p)^{\frac{2p}{(p-1)^2}} } \right)^{p^{n-1}} (2p)^{\frac{2n(p-1)+2}{(p-1)^2}} M_4^{-\frac{1}{p-1}}\frac{(t-l_nE)^{h_n}}{{(1+t)}^{j_n}}
\end{align*}
for $ t\ge l_nE $. We then combine \eqref{th25} and \eqref{jn} to conclude that for $ t\ge 2l_\infty E $ we have
\begin{align*}
G(t)&  \geq  \left( \frac{M_1 M_4^{\frac{1}{p-1}}}{ (2p)^{\frac{2p}{(p-1)^2}} } \right)^{p^{n-1}} (2p)^{\frac{2n(p-1)+2}{(p-1)^2}} M_4^{-\frac{1}{p-1}} \left(\frac{1}{2}t\right)^{h_n} (2t)^{-j_n} \\
&  \geq  \left( \frac{M_1 M_4^{\frac{1}{p-1}} t^{\frac{1-a}{p-1}}  }{ (2p)^{\frac{2p}{(p-1)^2}} 2^{\frac{1+a}{p-1}}} \right)^{p^{n-1}} (2p)^{\frac{2n(p-1)+2}{(p-1)^2}} M_4^{-\frac{1}{p-1}}
2^{\frac{1+a}{p-1}} t^{\frac{-1+a}{p-1}}.
\end{align*}
As above, if
$$ \frac{M_1 M_4^{\frac{1}{p-1}} t^{\frac{1-a}{p-1}}  }{ (2p)^{\frac{2p}{(p-1)^2}} 2^{\frac{1+a}{p-1}}}  >1,  $$
then $ G(t) \rightarrow \infty$ as $ n\rightarrow \infty $. Hence, we obtain that the upper bound of lifespan satisfies
$$ T \leq (M_1^{-1} M)^{\frac{p-1 }{1-a}},$$
with
$$ M= \frac{(2p)^{\frac{4p-2}{(p-1)^2}} 2^{\frac{(1+a)(p-1)+p^{n_{0}}}{(p-1)^2}}} {\left(   M_2(p-1) \right)^{\frac{1}{p-1}}}, $$
and hence we finish the proof for the subcritical case.

\par\noindent
\textbf{The critical case: $a=1$}.

In this case, we set
\begin{align}\label{kn}
& k_n:=\sum_{i=0}^n\frac{1}{2^i}.
\end{align}
Then for $E\ge 1$, we are going to prove
\begin{align}\label{cgt}
& G(t) \geq L_n \left(\log \frac{t}{k_n E} \right)^{q_n}, \ \text{for}\ t\geq k_n E
\end{align}
with the recurrence relations
\begin{equation}
\label{seqc}
\left\{
\begin{aligned}
& q_{n+1} = q_n p +1,  \ \  q_0=0, \\
& L_{n+1} = \frac{M_2 L_n^p}{4(2^n+1)(q_{n+1})}, \ \ L_0=M_1,
\end{aligned}
\right.
\end{equation}
where $ q_n \geq 0 $. Substituting \eqref{cgt} into \eqref{2lem12}, we get
\begin{align*}
G(t)& \geq  M_2 \int_{E}^t e^{-2s}ds\int^s_{E} \frac{e^{2r}|G(r)|^p} {1+r}dr\\
& \geq M_2 L_n^p \int_{k_n E}^t e^{-2s}ds\int^s_{k_n E} \frac{e^{2r}  \left(\log (t/(k_n E))\right)^{q_np}   } {1+r}dr\\
& \geq \frac{M_2 L_n^p}{2} \int_{k_n E}^t \frac{e^{-2s}}{s}ds \int^s_{k_n E} e^{2r}
\left(\log \frac{r}{k_n E}\right)^{q_np} dr
\end{align*}
for $ t\geq k_n E $. This further yields
\begin{equation}\label{Gt1}
	\begin{aligned}
G(t) & \geq \frac{M_2 L_n^p}{2} \int_{k_{n+1} E}^t \frac{e^{-2s}}{s}ds \int^s_{k_ns/k_{n+1}} e^{2r} \left(\log \frac{r}{k_n E}\right)^{q_np} dr\\
& \geq \frac{M_2 L_n^p}{2} \int_{k_{n+1}E}^t \frac{e^{-2s}}{s} \left(\log \frac{s}{k_{n+1} E}\right)^{q_np}ds \int^s_{k_ns/k_{n+1}} e^{2r} dr\\
& \geq \frac{M_2 L_n^p}{4} \int_{k_{n+1}E}^t \frac{1-e^{2(k_n/k_{n+1}-1)s}}{s} \left(\log \frac{s}{k_{n+1}E}\right)^{q_np}ds
\end{aligned}
	\end{equation}
for $ t\geq k_{n+1} E$. Similarly to \eqref{26-2}, we obtain
\begin{align*}
1-e^{2(k_n/k_{n+1}-1)s} &\ge1-e^{2(k_n-k_{n+1})E}\\
&\ge1-\frac{1}{1+2(k_{n+1}-k_n)E}\\
& =\frac{2(k_{n+1}-k_n)E}{1+2(k_{n+1}-k_n)E}\\
&\geq \frac{1}{2^n+1},
\end{align*}
which leads to by combining \eqref{Gt1}
\begin{align*}
G(t)& \geq  \frac{M_2 L_n^p}{4(2^n+1)}\int_{k_{n+1} E}^t \frac{1}{s} \left(\log \frac{s}{k_{n+1} E}\right)^{q_np}ds\\
& \geq \frac{M_2 L_n^p}{4(2^n+1) (q_np+1)}  \left(\log \frac{t}{k_{n+1} E}\right)^{q_np+1},
\end{align*}
which completes the proof for \eqref{cgt} with \eqref{seqc}.

It follows from $\eqref{seqc}_1$ that
\begin{align}\label{qn}
& q_n= p^{n} \left(  \frac{1}{p-1}\right)-\frac{1}{p-1},
\end{align}
which means
\begin{align*}
& q_{n+1} \leq  p^{n+1} \left(  \frac{1}{p-1}\right),
\end{align*}
and hence
\begin{align*}
\log L_{n+1} &\geq \log \frac{M_2}{4} + p \log L_n -\log(2^n+1)-\log  \left(p^{n+1}  \frac{1}{p-1}\right) \\
& \geq p \log L_n  - (n+1)\log(2p) - \log M_5,
\end{align*}
where
$$ M_5 := \frac{4}{M_2(p-1)}. $$
Similar to \cite{SST2025}, we obtain
\begin{align}\label{ln2}
\log L_n \geq p^n \log \left( \frac{M_1}{(2p)^{\frac{p}{(p-1)^2}} M_5^{\frac{1}{p-1}}} \right) + n \frac{\log (2p)}{p-1} + \log \left( (2p)^{\frac{p}{(p-1)^2}} M_5^{\frac{1}{p-1}} \right).
\end{align}
Combining \eqref{cgt} and \eqref{ln2} yield
\begin{align*}
G(t) &\geq  L_n \left(\log \frac{t}{k_n E} \right)^{q_n} \\
&\geq (2p)^{\frac{p+(p-1)}{(p-1)^2}} M_5^{\frac{1}{p-1}} \left( \frac{M_1}{(2p)^{\frac{p}{(p-1)^2}} M_5^{\frac{1}{p-1}}} \right)^{p^n}  \left( \log \frac{t}{k_n E} \right)^{q_n}.
\end{align*}
Setting
\[
 k_{\infty} =\sum_{i=0}^\infty\frac{1}{2^i},
 \]
hence we have for $ t\ge (k_\infty E)^2 $
\begin{align*}
G(t) &\geq (2p)^{\frac{p+(p-1)}{(p-1)^2}} M_5^{\frac{1}{p-1}} \left( \frac{M_1}{(2p)^{\frac{p}{(p-1)^2}} M_5^{\frac{1}{p-1}}} \right)^{p^n}  \left( \frac{1}{2} \log t \right)^{q_n}\\
& =  2^{\frac{1}{p-1}} (2p)^{\frac{p+(p-1)}{(p-1)^2}} M_5^{\frac{1}{p-1}}
(\log t)^{-\frac{1}{p-1}}  \left(\frac{M_1 (\log t )^{\frac{1}{p-1}} }{2^{\frac{1}{p-1}} (2p)^{\frac{p}{(p-1)^2}} M_5^{\frac{1}{p-1}}} \right)^{p^n}.
\end{align*}
If
$$  \frac{M_1 (\log t )^{\frac{1}{p-1}} }{2^{\frac{1}{p-1}} (2p)^{\frac{p}{(p-1)^2}} M_5^{\frac{1}{p-1}}} >1,  $$
then $ G(t) \rightarrow \infty$ as $ n\rightarrow \infty $, and hence the upper bound of lifespan satisfies
$$ T \leq\exp \left(  (M_1^{-1} M)^{p-1}  \right) $$
with
$$ M= 2^{ \frac{2p-1}{(p-1)^2}} p^{\frac{p}{(p-1)^2}} \left(\frac{4}{M_2(p-1)}\right)^{\frac{1}{p-1}}. $$
This completes the proof of Lemma \ref{2lem1} for the critical case.
\hfill$\square$

With the key lemma (Lemma \ref{2lem1}) in hand, we are going to prove Theorem \ref{thm2}. Following the idea in \cite{LLZ2017},
we introduce
\begin{equation}\label{phi}
\phi(x):=e^x+e^{-x},\quad \psi(x):=-e^x+e^{-x}.
\end{equation}
It is easy to see that
\[
\phi_x(x)=-\psi(x),\quad \phi(x)=-\psi_x(x).
\]
Multiplying the equation \eqref{IVP10} by $\phi(x)$ and integrating it in $\R$, we have
\[
\frac{d^2}{dt^2}\int_{\R}\phi(x)u(x,t)dx-\int_{\R}\psi(x)u_x(x,t)dx
=\int_{\R}\frac{|u_x(x,t)|^p}{\langle x\rangle^a}\phi(x)dx,
\]
due to the compactness of the support of $u$ and integration by parts.
Define
\[
F(t):=\int_{\R}\psi(x)u_x(x,t)dx,
\]
then the equation above can be rewritten as
\begin{equation}\label{F}
F''(t)-F(t)=\int_{\R}\frac{|u_x(x,t)|^p}{\langle x\rangle^a}\phi(x)dx.
\end{equation}

\par
On the other hand, by H\"{o}lder inequality it holds
\begin{equation}\label{Nonlin}
\begin{aligned}
|F(t)|&\le\left(\int_{\R}\frac{|u_x(x,t)|^p}{\langle x\rangle^a}\phi(x)dx\right)^{\frac1p}\left(\int_{\R}\phi(x)
\langle x\rangle^{\frac{a}{p-1}}dx\right)^{\frac{p-1}{p}}\\
&\leq 4^{\frac{p-1}{p}} \left(\int_{\R}\frac{|u_x(x,t)|^p}{\langle x\rangle^a}\phi(x)dx\right)^{\frac1p}\left(\int_0^{t+R}e^x(1+x)^{\frac{a}{p-1}}
dx\right)^{\frac{p-1}{p}}\\
&\leq (4C)^{\frac{p-1}{p}} (2R)^{\frac{a}{p}}
(1+t)^{\frac {a}p}e^{\frac{(p-1)t}{p}}\left(\int_{\R}\frac{|u_x(x,t)|^p}{\langle x\rangle^a}\phi(x)dx\right)^{\frac1p},
\end{aligned}
\end{equation}
which yields
\begin{align}\label{non1}
\int_{\R}\frac{|u_x(x,t)|^p}{\langle x\rangle^a}\phi(x)dx& \geq  M_6 \frac{|F(t)|^p}{(1+t)^{a}e^{(p-1)t}},
\end{align}
where
\[
M_6:= (4C)^{-(p-1)} (2R)^{-a}.
\]
Here we have employed Lemma 3.1 in Lai and Tu \cite{LT2020} to estimate
\[
\int_0^{1+t}e^x(1+x)^{\frac{a}{p-1}}dx
\le e^t\int_0^{1+t}e^{-(t-x)}(1+x)^{\frac{a}{p-1}}dx
\le Ce^t(1+t)^{\frac{a}{p-1}},
\]
where $C$ is the lemma.

\par
By combining \eqref{F} and \eqref{non1}, we come to
\begin{equation}\label{F1}
F''(t)-F(t)\geq M_6 \frac{|F(t)|^p}{(1+t)^{a}e^{(p-1)t}},
\quad\mbox{for}\ t\ge0.
\end{equation}
Setting
\[
G(t):=e^{-t}F(t),
\]
then we have
\[
F'(t)=e^tG(t)+e^tG'(t),\quad F''(t)=e^tG(t)+2e^tG'(t)+e^tG''(t),
\]
so that the inequality \eqref{F1} can be rewritten for $G(t)$ as
\begin{equation}\label{Gt}
G''(t)+2G'(t)\geq M_6 \frac{|G(t)|^p}{(1+t)^{a}},
\quad\mbox{for}\ t\ge0,
\end{equation}
which will give us the blow-up result and lifespan estimate in Theorem \ref{thm2}.
In fact, this inequality is exactly the same as (3.1) in Li and Zhou \cite{LZ1995},
and almost the same as the one in Lai and Takamura \cite{LT2018},
both papers study the semilinear damped wave equations.
Anyway, this inequality gives us
\[
e^{2t}G'(t)\geq M_6 \int_0^t\frac{e^{2s}|G(s)|^p}{(1+s)^{a}}ds+G'(0),
\]
which implies
\begin{align*}
G(t)& \geq M_6 \int_0^te^{-2s}ds\int_0^s\frac{e^{2r}|G(r)|^p}{(1+r)^{a}}dr
+\int_0^te^{-2s}dsG'(0)+G(0)\\
&  \geq M_6 \int_1^te^{-2s}ds\int_1^s\frac{e^{2r}|G(r)|^p}{(1+r)^{a}}dr + M_7\e
\end{align*}
for $ t\ge 1 $, where
\[
M_7:= \frac{1-e^{-2}}{2} \int_{\R}(-e^x+e^{-x})\{g'(x)-f'(x)\}dx+ \int_{\R}(-e^x+e^{-x})f'(x)dx.
\]
Hence, we arrive at
\begin{align}\label{Gt2}
&  G(t)\geq M_6 \int_1^te^{-2s}ds\int_1^s \frac{e^{2r}|G(r)|^p} {(1+r)^{a}}dr, \ \text{for}\ t\geq 1,
\end{align}
and
\begin{align}\label{Gt3}
&  G(t)\geq M_7 \varepsilon, \ \text{for} \ t\geq 1.
\end{align}
Taking $E=1$ in Lemma \ref{2lem1} and applying it to \eqref{Gt2} and \eqref{Gt3} for $t\ge 1$, we conclude that the upper bound of lifespan of problem \eqref{IVP10} satisfies
\[
T(\e) \leq \left\{
\begin{array}{ll}
 C_3^{\frac{p-1}{1-a}}\e^{-(p-1)/(1-a)}, & \mbox{if}\ a<1,\\
\exp \left(  C_{4}^{p-1} \varepsilon^{-(p-1)}   \right), & \mbox{if}\ a=1.
\end{array}
\right.
\]
with
\[
C_3= \left\{
\begin{array}{ll}
(2p)^{\frac{4p-2}{(p-1)^2}} 2^{\frac{(1-a)(p-1+p^{n_{0}})}{(p-1)^2}} (1-a)^{\frac{1}{p-1}}  (M_6(p-1))^{-\frac{1}{p-1}}M_7^{-1},  & \mbox{if}\ a\leq0,\\
(2p)^{\frac{4p-2}{(p-1)^2}} 2^{\frac{(1+a)(p-1)+p^{n_{0}}}{(p-1)^2}}\left(M_6(p-1) \right)^{-\frac{1}{p-1}}M_7^{-1}, & \mbox{if}\ 0<a<1,\\
\end{array}
\right.
\]
and
$$ C_{4}= M_7^{-1}2^{ \frac{2p-1}{(p-1)^2}} p^{\frac{p}{(p-1)^2}} \left(\frac{4}{M_6(p-1)}\right)^{\frac{1}{p-1}}. $$
Moreover, $ \varepsilon $ satisfies $\e\in(0,\e_1]$, where
\begin{align}\label{e2}
 \e_1 =  \min \left\{ \left( \frac{2 l_\infty R_2}{C_3^{(p-1)/(1-a)}}  \right)^{-\frac{1-a}{p-1}}, \, \left(  \frac{2\log k_\infty R_2}{C_4^{p-1}} \right)^{-\frac{1}{p-1}}\right\}
\end{align}
in this case,
then we will have
\[
C_3^{\frac{p-1}{1-a}}\e^{-(p-1)/(1-a)} > 2 l_\infty R_2\
\mbox{and}\  C_{4}^{p-1} \varepsilon^{-(p-1)}   > (k_\infty R_2)^2.
\]
This completes the proof for the Theorem \ref{thm2}.
\hfill$\square$


\section{Proof of Theorem \ref{thm3}}
Following Haruyama, Sasaki and Takamura \cite{HST}
which corrects a small error in the definition of the function space in the existence part of
Sasaki, Takamatsu and Takamura \cite{STT2023} for a case of $a=0$
(see Remark 3.2 in \cite{HST}),
the desired solution in Theorem \ref{thm3} is constructed in a $L^\infty$ space.
Its proof will be similar to the one for
\[
u_{tt}-u_{xx}=\frac{|u_t|^p}{\langle x\rangle^a}
\]
by Kitamura, Morisawa and Takamura \cite{KMT2023},
which has an oversight on the H\"{o}lder countinuity of the nonlinear term although.
The existence of a classical solution in \cite{KMT2023} is available for all $p>1$
with minor modifications in its proof.

\par
For our case, let us recall the expression of a solution $u$ of (\ref{IVP10}) in (\ref{integral}).
Then one can apply the time-derivative to (\ref{integral}) and (\ref{linear}) to obtain
\begin{equation}
\label{integral_t-derivative}
u_t(x,t)=\e u_t^0(x,t)+L_a'(|u_x|^p)(x,t)
\end{equation}
and
\[
u_t^0(x,t)=\frac{1}{2}\{f'(x+t)-f'(x-t)+g(x+t)+g(x-t)\},
\]
where $L_a'$ for a function $v=v(x,t)$ is defined by
\begin{equation}
\label{nonlinear_derivative}
\begin{array}{ll}
L_a'(v)(x,t):=
&\d\frac{1}{2}\int_0^t\frac{v(x+t-s,s)}{\{1+(x+t-s)^2\}^{a/2}}ds\\
&+\d\frac{1}{2}\int_0^t\frac{v(x-t+s,s)}{\{1+(x-t+s)^2\}^{a/2}}ds.
\end{array}
\end{equation}
Therefore, $u_t$ is expressed in terms of $u_x$.
On the other hand, applying the space-derivative to (\ref{integral}) and (\ref{linear}),
we have a $u_x$-closed integral equation,
\begin{equation}
\label{integral_x-derivative}
u_x(x,t)=\e u_x^0(x,t)+\overline{L_a'}(|u_x|^p)(x,t)
\end{equation}
and
\[
u_x^0(x,t)=\frac{1}{2}\{f'(x+t)+f'(x-t)+g(x+t)-g(x-t)\},
\]
where $\overline{L_a'}$ for a function $v=v(x,t)$ is defined by
\begin{equation}
\label{nonlinear_derivative_conjugate}
\begin{array}{ll}
\overline{L_a'}(v)(x,t):=
&\d\frac{1}{2}\int_0^t\frac{v(x+t-s,s)}{\{1+(x+t-s)^2\}^{a/2}}ds\\
&-\d\frac{1}{2}\int_0^t\frac{v(x-t+s,s)}{\{1+(x-t+s)^2\}^{a/2}}ds.
\end{array}
\end{equation}
Moreover, one more space-derivative to (\ref{integral_t-derivative}) yields that
\[
\begin{array}{ll}
u_{tx}(x,t)=&\d\e u_{tx}^0(x,t)\\
&\d+pL_a'(|u_x|^{p-2}u_xu_{xx})(x,t)
-aL_{a+2}'(|u_x|^px)(x,t)
\end{array}
\]
and
\[
u_{tx}^0(x,t):=\frac{1}{2}\{f''(x+t)-f''(x-t)+g'(x+t)+g'(x-t)\}.
\]
Similarly, we have that
\[
\begin{array}{ll}
u_{tt}(x,t)=
&\d\e u_{tt}^0(x,t)+\frac{|u_x(x,t)|^p}{(1+x^2)^{a/2}}\\
&\d+p\overline{L_a'}(|u_x|^{p-2}u_xu_{xx})(x,t)
-a\overline{L_{a+2}'}(|u_x|^px)(x,t)
\end{array}
\]
and
\[
u_{tt}^0(x,t)=\frac{1}{2}\{f''(x+t)+f''(x-t)+g'(x+t)-g'(x-t)\}.
\]
Therefore, $u_{tx}$ is expressed in terms of $u_{xx}$ and $u_x$, so is $u_{tt}$.
Also we have
\begin{equation}
\label{integral_xx-derivative}
u_{xx}(x,t)=\e u_{xx}^0(x,t)
+p\overline{L_a'}(|u_x|^{p-2}u_xu_{xx})(x,t)
-a\overline{L_{a+2}'}(|u_x|^px)(x,t)
\end{equation}
and
\[
u_{xx}^0(x,t)=u^0_{tt}(x,t).
\]

\par
First, we note the following fact.

\begin{prop}
\label{prop:system}
Assume that $(f,g)\in C_0^2(\R)\times C_0^1(\R)$.
Let $w$ be a $C^1$ solution of (\ref{integral_x-derivative}) in which $u_x$ is replaced with $w$.
Then,
\[
u(x,t):= \int^x_{-\infty}w(y,t)dy
\]
is a classical solution of (\ref{IVP10}) in $\R\times[0,T]$.
\end{prop}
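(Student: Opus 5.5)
The plan is to identify $u$ with the Duhamel representation (\ref{integral}) built from $w$, and then read off the regularity and the equation from that formula. Set
\[
V(x,t):=\e u^0(x,t)+L_a(|w|^p)(x,t).
\]
Since $w\in C^1$ and $p>1$, the function $|w|^p$ is $C^1$, with derivative $p|w|^{p-2}w\,\partial w$. Hence $|w|^p\langle x\rangle^{-a}\in C^1$. The standard d'Alembert--Duhamel theory then says that $V\in C^2(\R\times[0,T])$ solves
\[
V_{tt}-V_{xx}=|w|^p/\langle x\rangle^a,\qquad V(x,0)=\e f(x),\quad V_t(x,0)=\e g(x).
\]
This uses $(f,g)\in C^2_0\times C^1_0$, so that $u^0\in C^2$. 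Differentiating (\ref{nonlinear}) in $x$ gives exactly $\partial_x L_a(v)=\overline{L_a'}(v)$, and $\partial_x u^0=u^0_x$. By (\ref{integral_x-derivative}) with $u_x$ replaced by $w$, we get $V_x=w$.

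\textbf{Step 1 (support of $w$), which I expect to be the main obstacle.} I will show that $w(x,t)=0$ for $|x|\ge t+R$. Fix $x\ge t+R$. By (\ref{supp_data}), $u_x^0(x,t)=0$. The two arguments appearing in $\overline{L_a'}$ are $x+t-s\ge s+R$ and $x-t+s\ge s+R$, so both lie outside the cone at time $s$. The case $x\le -t-R$ is symmetric.

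Put $m(t):=\sup_{|x|\ge t+R}|w(x,t)|$. This is finite, because $w$ is bounded on the relevant compact pieces; if needed I will first restrict to a bounded $x$-range and use continuity of $w$. Write $K:=\sup|w|^{p-1}$ on the region concerned, using local boundedness. Then
\[
m(t)\le \int_0^t C K\, m(s)\,ds .
\]
Gronwall's inequality gives $m\equiv0$. The delicate point is making this sup well defined and bounded when $w$ is only assumed to be $C^1$. If global boundedness is not available, I will run the Gronwall argument on $\{x\ge t+R,\ |x|\le N\}$ and let $N$ vary. This works because the characteristics used stay within a bounded strip over the finite time interval.

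\textbf{Step 2 (identification).} By Step 1, $|w|^p$ vanishes for $|y|\ge s+R$. Hence $L_a(|w|^p)(x,t)=0$ for $x\le -t-R$, since then the whole backward triangle lies outside the cone. Also $u^0(x,t)=0$ there. So $V(x,t)=0$ for $x\le -t-R$, and likewise $u(x,t)=\int_{-\infty}^x w\,dy=0$ there. Since $\partial_x(u-V)=w-w=0$, the difference $u-V$ depends only on $t$. It vanishes for $x\to-\infty$, so $u\equiv V$.

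\textbf{Step 3 (conclusion).} From Step 2, $u=V$ is $C^2$ and satisfies $u_x=w$. Therefore
\[
u_{tt}-u_{xx}=|u_x|^p/\langle x\rangle^a,
\]
with initial data $\e f$ and $\e g$. Thus $u$ is a classical solution of (\ref{IVP10}) on $\R\times[0,T]$.
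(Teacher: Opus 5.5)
Your proposal is correct and follows the argument the paper leaves implicit behind ``this is easy along with the computations above'': identify $u$ with $\varepsilon u^0+L_a(|w|^p)$ via $\partial_x L_a=\overline{L_a'}$ and vanishing as $x\to-\infty$, then read off $C^2$ regularity and the equation from the Duhamel formula with a $C^1$ source. The only extra ingredient is your Step 1, which the paper gets for free because $w$ is sought in $X$ (support built in); if you keep it, run Gronwall for each fixed $N$ on the backward-invariant trapezoid $\{t+R\le x\le N-t\}$ rather than on $\{t+R\le x\le N\}$, since from points with $x$ near $N$ the characteristic foot $x+t-s$ leaves the latter set.
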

\par\noindent
{\bf Proof.} This is easy along with the computations above in this section.
\hfill$\Box$

\par
According to Proposition \ref{prop:system}, we shall construct a $C^1$ solution
of (\ref{integral_x-derivative}) in which $u_x$ is replaced with the unknown function $w$.
Let $\{w_j\}_{j\in \N}$ be a sequence of $C^1(\R\times[0,T])$ defined by
\begin{equation}
\label{w_j}
\left\{
\begin{array}{l}
w_{j+1}=\e u_x^0+\overline{L_a'}(|w_j|^p), \\
w_1=\e u_x^0.
\end{array}
\right.
\end{equation}
Then, in view of (\ref{integral_xx-derivative}), $(w_j)_x$ has to satisfy
\begin{equation}
\label{w_j_x}
\left\{
\begin{array}{l}
(w_{j+1})_x=\e u_{xx}^0+p\overline{L_a'}(|w_j|^{p-2}w_j(w_j)_x)(x,t)
-a\overline{L_{a+2}'}(|w_j|^px)(x,t), \\
(w_1)_x=\e u_{xx}^0,
\end{array}
\right.
\end{equation}
so that the functional space in which $\{w_j\}$ converges is
\[
X:=\{w\in C^1(\R\times[0,T]): \|w\|_X<\infty, \mbox{ supp }w \subset \{(x,t)\in\R\times[0,T] : |x|\leq t+R\}\}
\]
which is equipped with the norm
\[
\|w\|_X:=\|w\|+\|w_x\|
\]
where
\[
\|w\|:= \sup_{(x,t)\in\R\times[0,T]}|w(x,t)|.
\]
We note that (\ref{integral_x-derivative}) implies that
\[
\begin{array}{l}
\mbox{supp}\ w_j\subset\{(x,t)\in\R\times[0,T] : |x|\leq t+R\}\\
\Longrightarrow\mbox{supp}\ w_{j+1}\subset\{(x,t)\in\R\times[0,T] : |x|\leq t+R\}.
\end{array}
\]
\par
The following lemma provides us with an a priori estimate.
\begin{prop}
\label{prop:apriori}
Let $w\in C(\R\times[0,T])$ and supp\ $w\subset\{(x,t)\in\R\times[0,T]:|x|\leq t+R\}$.
Then, the following a priori estimate holds;
\begin{equation}
\label{apriori}
\|\overline{L'_a}(|w|^p)\|\leq C\|w\|^pE(T),
\end{equation}
where $C$ is a positive constant independent of $T$ and $\e$.
$E(T)$ is defined by
\begin{equation}
\label{E(T)}
E(T):=
\left\{
\begin{array}{cl}
(T+2R)^{1-a} & \mbox{if}\ a<1,\\
\log(T+2R) & \mbox{if}\ a=1,\\
1 & \mbox{if}\ a>1.
\end{array}
\right.
\end{equation}
\end{prop}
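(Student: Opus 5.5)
Since $\overline{L'_a}$ is the difference of the two integrals in (\ref{nonlinear_derivative_conjugate}), I will simply bound its absolute value by the sum of the absolute values of the two pieces and estimate each piece separately. First I pull out $\|w\|^p$:
\[
|\overline{L'_a}(|w|^p)(x,t)|\le \frac{\|w\|^p}{2}\sum_{\pm}\int_0^t \chi_{\{|x\pm(t-s)|\le s+R\}}\,\langle x\pm(t-s)\rangle^{-a}\,ds .
\]
Here the indicator comes from the support condition on $w$. It then suffices to show that
\[
J_\pm(x,t):=\int_0^t \chi_{\{|x\pm(t-s)|\le s+R\}}\langle x\pm(t-s)\rangle^{-a}ds\le C\,E(T)
\]
uniformly for $(x,t)\in\R\times[0,T]$. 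By the symmetry $x\mapsto -x$ it is enough to treat $J_+$.

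To estimate $J_+$, I substitute $y=x+t-s$, so that $dy=-ds$ and $y$ runs over $[x,x+t]$. The support condition becomes $|y|\le x+t-y+R$. In particular $y\le (x+t+R)/2$, and the interval of integration has length at most $t$. I then split according to the sign of $a$.

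\emph{Case $a\le 0$.} Here $\langle y\rangle^{-a}\le \langle x+t+R\rangle^{|a|}$ on the relevant set. We may assume $|x|\le t+R$, since otherwise $J_+=0$ for the interior support reasons. Then $|y|\le t+R$, so $J_+\le t\,(1+t+R)^{-a}\le C(T+2R)^{1-a}$.

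\emph{Case $a>0$.} Here I use the crude bound
\[
J_+\le\int_{x}^{x+t}\langle y\rangle^{-a}dy\le \int_{-(T+R)}^{T+R}\langle y\rangle^{-a}dy ,
\]
where the restriction to $|y|\le T+R$ again comes from the support of $w$ (we need $|y|\le s+R\le T+R$). The last integral is bounded by $C(T+2R)^{1-a}$ if $0<a<1$, by $C\log(T+2R)$ if $a=1$ (using $R\ge1$, so that $\log(T+2R)\ge\log 2>0$ absorbs the constants), and by $C$ if $a>1$. Combining this with the case $a\le0$ gives exactly $E(T)$ as defined in (\ref{E(T)}).

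The only point requiring care is the case $a<0$: there the weight grows, and I must use the light-cone restriction $|y|\le s+R\le T+R$, rather than integrability, to get the extra power $(T+2R)^{-a}$ while the length of the $s$-interval contributes the factor $T$. Beyond that, the constant $C$ depends only on $a$ and $R$, and not on $T$ or $\e$, so the estimate (\ref{apriori}) follows.
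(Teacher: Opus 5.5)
Your proof is correct and follows the paper's route. The paper also uses the triangle inequality to get $|\overline{L'_a}(v)|\le L'_a(|v|)$ and then defers the estimate of the two characteristic integrals to Proposition 3.1 of Kitamura--Morisawa--Takamura, which you instead carry out by hand. The only blemish is the intermediate claim $\langle y\rangle^{-a}\le\langle x+t+R\rangle^{|a|}$ for $a<0$: it is false (e.g.\ at $x=-t-R$, $y=x$), but you never use it, since your final bound rests on $|y|\le s+R\le t+R$.
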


\par\noindent
{\bf Proof.} The proof of Proposition \ref{prop:apriori} is almost the same as
the one of Proposition 3.1 in Morisawa, Kitamura and Takamura \cite{KMT2023}
because the difference between $L'_a$ and $\overline{L'_a}$ is a sign of the second terms
in (\ref{nonlinear_derivative}) and (\ref{nonlinear_derivative_conjugate}),
so that we have
\[
\left|\overline{L'_a}(v)(x,t)\right|\le L'_a(|v|)(x,t)\quad\mbox{in}\ \R\times[0,T].
\]
Therefore the remained part is completely same as the one
of Proposition 3.1 in \cite{KMT2023}.
\hfill$\Box$

\par
Once Propositions above are established,
the remained part of the proof of Theorem \ref{thm3} is a routine work.
We shall omit the details by citing \cite{KMT2023} for basic computations,
Kido, Sasaki, Takamatsu and Takamura \cite{KSTT2024} for handling
the H\"{o}lder countinuity of the nonlinear term.
\hfill$\Box$

\section*{Acknowledgement}
This work was started when the second author visited Tohoku University during 8/7/2025-4/10/2025.
She appreciates the financial support from the International Office and School of Mathematical Sciences of Zhejiang Normal University.
Moreover, all the authors are partially supported by JSPS Bilateral Program Number JPJSBP120257401.
Also the first author is partially supported by NSFC (No.12271487, W2521007).
The third and fourth authors are partially supported by the Grant-in-Aid for Scientific Research(A) (No.22H00097)
and (C) (No.24K06819), Japan Society for the Promotion of Science.
Finally, the fourth author is partially supported by
the Science Committee of the Ministry of Education and Science of the
Republic of Kazakhstan (grant no.AP26195417).

All the authors appreciate the reviewer's thoughtful comments, which were very helpful in improving the quality and clarity of the manuscript.

\bibliographystyle{plain}

\end{CJK*}

\end{document}